\newcommand{\prob}{\mathbf{P}}
\newcommand{\esp}{\mathbf{E}}
\newcommand{\ud}{\text{\normalfont d}}
\newcommand{\var}{\mathbf{V}\text{\normalfont ar}}
\newcommand{\tops}{\stackrel{a.s.}{\longrightarrow}}
\newcommand{\cov}{\mathbf{C}\text{\normalfont ov}}
\newcommand{\R}{\mathbf{R}}
\renewcommand{\phi}{\varphi}
\newcommand{\fin}{
$\hfill
\mathbin{\vbox{\hrule\hbox{\vrule height1.5ex \kern.6em
\vrule height1.5ex}\hrule}}$}
\newtheorem{prop1}{Proposition}[section]
\newtheorem{lem1}[prop1]{Lemma}
\newtheorem{cor1}[prop1]{Corollary}
\newtheorem{theo}[prop1]{Theorem}
\newtheorem{hyp}[prop1]{Assumption}
\newtheorem{hyps}[prop1]{Assumptions}
\newtheorem{rem}[prop1]{Remark}
\email{\vspace{-0.2cm}romain.azais@inria.fr}
\keywords{Piecewise-deterministic Markov processes, nonparametric estimation, recursive estimator, transition kernel, asymptotic consistency}
\subjclass[2010]{Primary:  62G05, Secondary: 62M05}
\begin{document}
\title[A recursive nonparametric estimator for the transition kernel of a PDMP]
{A recursive nonparametric estimator for the transition kernel of a piecewise-deterministic Markov process}
\author{Romain Aza\"{\i}s}

\address{
INRIA Bordeaux Sud-Ouest, team CQFD, France and Universit\'e Bordeaux, IMB, CNRS UMR 5251,
200, Avenue de la Vieille Tour, 33405 Talence cedex, France.
}

\thanks{This work was supported by ARPEGE program of the French National Agency of Research (ANR), project “FAUTOCOES”, number ANR-09-SEGI-004.}

\begin{abstract}
In this paper, we investigate a nonparametric approach to provide a recursive estimator of the transition density of a piecewise-deterministic Markov process, from only one observation of the path within a long time. In this framework, we do not observe a Markov chain with transition kernel of interest. Fortunately, one may write the transition density of interest as the ratio of the invariant distributions of two embedded chains of the process. Our method consists in estimating these invariant measures. We state a result of consistency and a central limit theorem under some general assumptions about the main features of the process. A simulation study illustrates the well asymptotic behavior of our estimator.
\end{abstract}

\maketitle


\section{Introduction}

The purpose of this paper is to investigate a nonparametric recursive method for estimating the transition kernel of a piecewise-deterministic Markov process, from only one observation of the process within a long time interval.

Piecewise-deterministic Markov processes (PDMP's) have been introduced in the literature by Davis in \cite{Dav}. They are a general class of non-diffusion stochastic models involving deterministic motion broken up by random jumps, which occur either when the flow reaches the boundary of the state space or in a Poisson-like fashion. The path depends on three local features namely the flow $\Phi$, which controls the deterministic trajectories, the jump rate $\lambda$, which governs the inter-jumping times, and the transition kernel $Q$, which determines the post-jump locations. An appropriate choice of the state space and the main characteristics of the process covers a large variety of stochastic models covering problems in reliability (see \cite{Dav} and \cite{MR2528336}) or in biology (see \cite{subtilin} and \cite{Gen}) for instance. In this context, it appears natural to propose some nonparametric methods for estimating both the characteristics $\lambda$ and $Q$, which control the randomness of the motion. Indeed, the deterministic flow is given by physical equations or deterministic biological models. In \cite{Az12b}, Azaïs \textit{et al.} proposed a kernel method for estimating the conditional probability density function associated with the jump rate $\lambda$, for a non-stationary PDMP defined on a general metric space. This work was based on a generalization of Aalen's multiplicative intensity model and a discretization of the state space. In the present paper, we assume that $Q$ admits a density $q$ with respect to the Lebesgue measure, and we focus on the nonparametric estimation of this function, from the observation of a PDMP within a long time, without assumption of stationarity. In addition, since measured data are often processed sequentially, it is convenient to propose a recursive estimator. To the best of our knowledge, no nonparametric estimation procedures are available in the literature for general PDMP's.

Nonparametric estimation methods for stationary Markov chains have been extensively investigated, beginning with Roussas in \cite{Roussas}. He studied kernel methods for estimating the stationary density and the transition kernel of a Markov chain satisfying the strong Doeblin's condition. Later, Rosenblatt proposed in \cite{Rosenblatt} some results on the bias and the variance of this estimator in a weaker framework. Next, Yakowitz improved in \cite{Yako} the previous asymptotic normality result assuming a Harris-condition. Masry and Györfi in \cite{Masry}, and Basu and Sahoo in \cite{Basu}, have completed this survey. There exists also an extensive literature on nonparametric estimates for non-stationary Markov processes. We do not attempt to present an exhaustive survey on this topic, but refer the interested reader to \cite{Clem,Doukhan, MD,MR1110000,Lacour1,Lacour2,Liebscher} and the references therein. In this new framework, Doukhan and Ghindès have investigated in \cite{Doukhan} a bound of the integrated risks for their estimate. Hern\'andez-Lerma \textit{et al.} in \cite{MR1110000} and Duflo in \cite{MD} made inquiries about recursive methods for estimating the transition kernel or the invariant distribution of a Markov chain. Liebscher gave in \cite{Liebscher} some results under a weaker condition than Doeblin's assumption. More recently, Clémençon in \cite{Clem} proposed a quotient estimator using wavelets and provided the lower bound of the minimax $\mathbf{L}^p$-risk. Lacour suggested in \cite{Lacour2} an estimator by projection with model selection, next she introduced in \cite{Lacour1} an original estimate by inquiring into a new contrast derived from regression framework.

Our investigation and the studies of the literature mentioned before are different and complementary. In this paper, we propose to estimate the transition density $q$ of a PDMP by kernel methods. Nevertheless, we do not observe a Markov chain whose transition distribution is given by $q$. Fortunately, one may write the function of interest as the ratio of two invariant measures: the one of the two components pre-jump location and post-jump location, over the one of the pre-jump location. Indeed, $Q(x,A)$ is defined as the conditional probability that the post-jump location is in $A$, given the path is in $x$ just before the jump. Therefore, we suggest to estimate both these invariant measures in order to provide an estimator of the transition kernel $Q$. A major stumbling block for estimating the invariant law of the pre-jump location is related with the transition kernel of this Markov chain, which may charge the boundary of the state space. As a consequence, the transition kernel, as well as the corresponding invariant distribution, admits a density only on the interior of the state space. The investigated approach for estimating the invariant measure is based on this property of the transition kernel. But the main difficulty appears for analyzing the two-components process pre-jump location, post-jump location. This Markov chain has a special structure, because its invariant distribution admits a density function on the interior of the state space, unlike its transition kernel. Indeed, the pre-jump location is distributed on the curve governed by the deterministic flow initialized by the previous post-jump location. As a consequence, the author have to explore a new method for estimating the two-dimensional invariant measure of interest. The proposed one is more universal, but implies a more restrictive assumption on the shape of the bandwidth.

An intrinsic complication throughout the paper comes from the presence of deterministic jumps, when the path tries to cross the boundary of the state space. Indeed, this induces that the invariant distributions mentioned above may charge a subset with null Lebesgue measure. This important feature has been introduced by Davis in \cite{Dav} and is very attractive for the modeling of a large number of applications. For instance, one may find in \cite{MR1679540} an example of shock models with failures of threshold type. One may also refer the reader to \cite{subtilin}, where the authors develop a PDMP to capture the mechanism behind antibiotic released by a bacteria. Forced jumps are used to model a deterministic switching when the concentration of nutrients rises over a certain threshold.

The paper is organized as follows. Section \ref{def_pdmp} is devoted to the precise formulation of our problem and the presentation of the main results of convergence. The consistency of our nonparametric recursive estimator is stated in Theorem \ref{Q:PDMP:CVps}. A central limit theorem lies in Theorem \ref{Q:PDMP:TCL}. Section \ref{s:simu} deals with numerical considerations for illustrating the asymptotic behavior of our estimate. The strategy and the proofs of the main results are deferred into Appendices \ref{s:s3}, \ref{s:s4} and \ref{s:clt}.



\section{Problem formulation}

This section is devoted to the definition of a piecewise-deterministic Markov process. Moreover, we present also the recursive nonparametric estimator of the transition density $q$ that we consider and our main results of convergence.

\label{def_pdmp}


\subsection{Definition of a PDMP}

We present the definition of a piecewise-deterministic Markov process on $\R^d$, where $d$ is an integer greater or equal to $1$. The process evolves in an open subset $E$ of $\R^d$ equipped with the Euclidean norm $|\cdot|$. The motion is defined by the three local characteristics $(\lambda,Q,\Phi)$.
\begin{itemize}
	\item $\Phi : \R^d\times \mathbf{R}\to \R^d$ is the deterministic flow. It satisfies,
		$$\forall \xi \in \R^d,~\forall s,t\in\mathbf{R},~ \Phi_\xi(t+s) = \Phi_{\Phi_\xi(t)}(s) .$$
		For each $\xi\in E$, $t^+(\xi)$ denotes the deterministic exit time from $E$:
		$$ t^+(\xi) = \inf \{t>0~:~\Phi_\xi(t) \in \partial E\},$$
		with the usual convention $\inf\emptyset = +\infty$.
	\item $\lambda: \R^d\to\mathbf{R}_+$ is the jump rate. It is a measurable function which satisfies,
		$$\forall\xi\in\R^d,~\exists \varepsilon>0,~ \int_0^\varepsilon \lambda\big( \Phi_{\xi}(s) \big) \ud s < +\infty .$$
	\item $Q$ is a Markov kernel on $(\R^d,\mathcal{B}(\R^d))$ which satisfies, for any $\xi\in\R^d$,
	$$Q(\xi,E\setminus\{\xi\})=1 \quad\text{and,}\quad\forall B\in\mathcal{B}(\R^d),~Q(\xi,B) = \int_Bq(\xi,z)\ud z,$$
		where the transition density $q$ is piecewise-continuous.
\end{itemize}

\noindent
There exists a filtered probability space $(\Omega,\mathcal{A},(\mathcal{F}_t),\prob)$, on which a process $(X_t)$ is defined (see \cite{Dav}). Its motion, starting from $x\in E$, can be described as follows. $T_1$ is a positive random variable whose survival function is,
$$ \forall t \geq 0, ~ \mathbf{P}( T_1 > t|X_0=x) = \exp \left(  - \int_0^t \lambda( \Phi_x(s) ) \ud s \right) \mathbf{1}_{\{ 0 \leq t < t^+(x) \}} .$$
One chooses an $E$-valued random variable $Z_1$ according to the distribution $Q(\Phi_{x}(T_1),\cdot)$. Let us remark that the post-jump location depends only on the pre-jump location $\Phi_x(T_1)$. The trajectory between the times $0$ and $T_1$ is given by
\begin{displaymath}
X_t = \left\{ 
\begin{array}{cl}
\Phi_{x}(t) 	& \text{for $0\leq t < T_1$,} \\
Z_1		& \text{for $t=T_1$.}
\end{array}
\right.
\end{displaymath}
Now, starting from $X_{T_1}$, one selects the time $S_2 = T_2-T_1$ and the post-jump location $Z_2$ in a similar way as before, and so on. This gives a strong Markov process with the $T_k$'s as the jump times (with $T_0=0$). One often considers the embedded Markov chain $(Z_n,S_n)$ associated to the process $(X_t)$ with $Z_n=X_{T_n}$, $S_n=T_n-T_{n-1}$ and $S_0=0$. The $Z_n$'s denote the post-jump locations of the process, and the $S_n$'s denote the interarrival times.


In the sequel, we shall consider the discrete-time process $(Z_n^-)$ defined by,
$$\forall n\geq1,~Z_n^- = \Phi_{Z_{n-1}} (S_n) .$$
This sequence is naturally of interest. Indeed, the transition kernel $Q$ describes the transition from $Z_n^-$ to $Z_{n}$. $Z_n^-$ stands for the location of $(X_t)$ just before the $n$th jump. We shall prove that $(Z_n^-)$ is a Markov chain in Lemma \ref{lem:Znmoins}.

\begin{center}\textbf{Some additional notations}\end{center}

Throughout the paper, $f$ and $G$ denote the conditional probability density function and the conditional survival function associated with $\lambda(\Phi_{\cdot}(\cdot))$. Precisely, for all $z\in \R^d$ and $t\geq0$,
\begin{eqnarray*}
G(z,t) &=& \exp\left( - \int_0^t \lambda(\Phi_z(s))\ud s\right) ,\\
f(z,t) &=& \lambda(\Phi_z(t)) G(z,t) .
\end{eqnarray*}
Moreover, $\mathcal{S}$ denotes the conditional distribution of $S_{n+1}$ given $Z_n$, for all integer $n$. For all $z\in E$ and $\Gamma\in\mathcal{B}(\R_+)$, we have
\begin{eqnarray}
\mathcal{S}(z,\Gamma) 	&=& \prob\left( S_{n+1}\in\Gamma | Z_n=z , \sigma(Z_i,S_i\,:\,0\leq i\leq n) \right) \label{expr:S} \\
					&=& \int_{\Gamma\cap[0,t^+(z)[} f(z,s)\ud s ~+~\mathbf{1}_{\Gamma}(t^+(z))G(z,t^+(z)) . \nonumber
\end{eqnarray}
The first term corresponds to random jumps which occur in a Poisson-like fashion, while the second one is associated with deterministic jumps. The relation between $\mathcal{S}$ and the conditional survival function $G$ is given, for all $z\in\R^d$ and $t\geq0$, by $G(z,t) = \mathcal{S}(z,]t,+\infty[)$.

\subsection{Main results}

Our main objective in this paper is to provide a recursive estimator of the transition density $q(x,y)$ for any $(x,y)\in E^2$. The recursive estimator of $q(x,y)$ that we consider may be written as follows,
\begin{equation*}
\widehat{q}_n(x,y)    =   \frac{\displaystyle\sum_{j=1}^{n+1} \frac{1}{w_j^{2d}} K\left(\frac{Z_j^- - x}{w_j}\right) K\left(\frac{Z_j-y}{w_j}\right)}{\displaystyle\sum_{j=1}^{n+1} \frac{1}{v_j^{d}} K\left(\frac{Z_j^- - x}{v_j}\right) },
\end{equation*}
where $w_j = w_1 j^{-\beta}$, $v_j=v_1 j^{-\alpha}$, with $v_1,w_1,\alpha,\beta>0$. In addition, $K$ is a kernel function from $\R^d$ to $\R_+$ satisfying the following conditions,
\begin{enumerate}[(i)]
\item $\text{\normalfont supp}\,K\subset B(0_{\R^d},{\delta})$, where $\delta>0$ and $B(\xi,r)$ stands for the open ball centered at $\xi$ with radius $r$,
\item $K$ is a bounded function.
\end{enumerate}


\noindent
In particular, $\int_{\R^d} K^2(z)\ud z$ is finite. $\tau^2$ denotes this integral in the sequel.

Under the technical conditions given in Assumptions \ref{hyp:mumeasure} and \ref{hyps:regularity01}, we shall state that the Markov chain $(Z_n^-)$ admits a unique invariant measure $\pi$, which has a density $p$ on the interior of the state space (see Corollary \ref{cor:h}). Under these hypotheses, we have a result of consistency.

\begin{theo}\label{Q:PDMP:CVps}
Let us choose $v_1$ and $w_1$ such that $\max(v_1,w_1)\delta<\text{\normalfont dist}(x,\partial E)$. If $p(x)>0$, $\alpha d<1$ and $8\beta d<1$, then,
$$ \widehat{q}_n(x,y) \tops q(x,y) ,$$
when $n$ goes to infinity.
\end{theo}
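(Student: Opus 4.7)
The plan is to exploit the identity
\[ q(x,y) = \frac{p_2(x,y)}{p(x)}, \]
where $p$ is the density of the invariant law of $(Z_n^-)$ on the interior of $E$ (provided by Corollary~\ref{cor:h}) and $p_2(x,y) = p(x)\,q(x,y)$ is the joint invariant density of the pair chain $(Z_n^-,Z_n)$ on the interior of $E\times E$; the latter identity follows immediately from $Z_n \mid Z_n^- \sim Q(Z_n^-,\cdot)$. The condition $\max(v_1,w_1)\delta < \mathrm{dist}(x,\partial E)$ guarantees that the kernels in the numerator and the denominator are supported in the interior of $E$, where these densities exist and are continuous.

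The theorem then reduces to proving the two almost-sure limits
\[ \frac{1}{n}\sum_{j=1}^{n+1}\frac{1}{v_j^{d}}\, K\!\left(\frac{Z_j^- - x}{v_j}\right) \tops p(x)\int_{\R^d} K(z)\,\ud z, \]
\[ \frac{1}{n}\sum_{j=1}^{n+1}\frac{1}{w_j^{2d}}\, K\!\left(\frac{Z_j^- - x}{w_j}\right) K\!\left(\frac{Z_j - y}{w_j}\right) \tops p(x)\,q(x,y)\left(\int_{\R^d} K(z)\,\ud z\right)^{\!2}, \]
since the ratio defining $\widehat{q}_n(x,y)$ simultaneously cancels the $1/n$ normalizations and the common $\int K$ factor, and $p(x) > 0$ makes the limit well defined.

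Both limits would be obtained by a standard bias--variance decomposition. The Cesaro average of the conditional expectations converges to the announced limit via a change of variables in the kernels, the continuity of $p$ at $x$ (respectively of $p_2$ at $(x,y)$), and the ergodicity of $(Z_n^-)$ inherited from Assumptions~\ref{hyp:mumeasure}--\ref{hyps:regularity01} through Corollary~\ref{cor:h}. The stochastic residual is driven to zero almost surely by a Kolmogorov/martingale-type criterion: bounding the dispersion of the $j$-th summand by $O(v_j^{-d}) = O(j^{\alpha d})$ in the univariate case, the hypothesis $\alpha d < 1$ makes $\sum_j j^{-2} v_j^{-d}$ summable, and the residual vanishes.

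The main obstacle is the numerator. As stressed in the introduction, $(Z_j^-,Z_j)$ is not a standard $2d$-dimensional Markov chain: $Z_j^-$ is constrained to the deterministic curve issued from $Z_{j-1}$, and a genuine bivariate density only emerges after integration against $\mathcal{S}(Z_{j-1},\cdot)$. The naive $L^2$ variance bound available for the denominator does not propagate, and one has to fall back on a coarser higher-moment estimate on the kernel product that is valid without a joint density structure. Pushing this coarser estimate through the almost-sure Kolmogorov criterion inflates the bandwidth condition from the naive $2\beta d < 1$ to the stated $8\beta d < 1$. Carrying out these variance and higher-moment computations, together with the ergodic input for the pair chain, is the technical heart of the argument that I would defer to Appendices~\ref{s:s3} and~\ref{s:s4}.
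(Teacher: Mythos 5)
Your skeleton coincides with the paper's: write $q(x,y)=h(x,y)/p(x)$, where $h(x,y)=p(x)q(x,y)$ is the density on $E\times E$ of the invariant law $\eta$ of the pair chain, prove that the denominator average converges a.s.\ to $p(x)$ and the numerator average to $h(x,y)$, and conclude by taking the ratio using $p(x)>0$; this is exactly the combination of Propositions \ref{prop:pchapeaun} and \ref{prop:hchapeaun}. Your denominator argument also matches the paper: a martingale-plus-ergodic decomposition in which the increments have variance $O(v_j^{-d})=O(j^{\alpha d})$, the Lipschitz property of $r$ handles the smoothing bias, and $\alpha d<1$ gives the a.s.\ law of large numbers (the paper uses $\langle M\rangle_n\sim C n^{1+\alpha d}$ and the second LLN for martingales). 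Two small caveats: the cancellation in the ratio requires $\int_{\R^d}K(z)\,\ud z=1$ (implicit in calling $K$ a kernel, but your displayed limits do not cancel otherwise), and the bias of the numerator is controlled in the paper not by the ergodic theorem but by the uniform geometric bound $\sup_{x}|p_j(x)-p(x)|\le\|r\|_{\infty}\kappa\rho^{-(j-1)}$ together with the Lipschitz assumptions on $q$ and $r$.

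The genuine gap is the almost-sure control of the stochastic fluctuation of the numerator, which you defer entirely. You correctly diagnose the obstacle (the transition kernel of $(Z_n^-,Z_n)$ has no $2d$-dimensional density, so the martingale device used for $\widehat{p}_n$ is unavailable) and you guess the exponent $8\beta d<1$, but you do not supply a mechanism, and the obvious one fails: the best available second-moment bound is $\var(\widehat{h}_n(x,y))=O(n^{-1}w_{n}^{-4d})=O(n^{4\beta d-1})$, obtained in the paper from the covariance decay $|\cov(A_k,A_n)|\le \|K\|_{\infty}^4 B\, w_n^{-4d} b^{k-n}(1+b^{-k})$ furnished by uniform geometric ergodicity (Theorem 16.1.5 of Meyn--Tweedie), and this sequence is not summable, so Chebyshev plus Borel--Cantelli only yields convergence in probability (or a.s.\ along subsequences), not the claimed almost-sure limit. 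The paper's missing ingredient is Van Ryzin's almost-supermartingale lemma applied to $Y_n=\big(\widehat{h}_n(x,y)-\esp[\widehat{h}_n(x,y)]\big)^2$: the recursion on $V_n=\frac{1}{n}\sum_{k=1}^{n+1}(A_k-\esp[A_k])$ gives $\esp[Y_n|\mathcal{S}_{n-1}]\le Y_{n-1}+Y'_{n-1}$ with $\esp|Y'_{n-1}|=O(n^{-2}w_n^{-4d})+O(n^{-3/2}w_n^{-4d})=O(n^{-3/2+4\beta d})$, and summability of this series is precisely where $8\beta d<1$ enters; a.s.\ convergence of $Y_n$ to some limit plus $\esp[Y_n]\to 0$ then identifies that limit as $0$. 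Without this (or an equivalent device converting the $\mathbf{L}^2$ rate into an almost-sure statement), your plan proves at most $L^2$-consistency of the numerator, and hence does not yield the theorem as stated.
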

\begin{proof}
The proof is stated at the end of Appendix \ref{s:s4}.
\end{proof}

\noindent
Under an additional condition presented in Assumption \ref{hyps:clt}, we have the following central limit theorem.

\begin{theo}\label{Q:PDMP:TCL}
Let us choose $v_1$ and $w_1$ such that $\max(v_1,w_1)\delta<\text{\normalfont dist}(x,\partial E)$. If $p(x)>0$,
$$\frac{1}{2+d}<\alpha<\frac{1}{d}\qquad\text{and}\qquad 2(1-\alpha d)<4\beta<\min\left(\frac{1}{2d}\, ,\, \alpha - \frac{1}{2d}\right) ,$$
then, when $n$ goes to infinity,
$$ n^{(1-\alpha d)/2} \big(\widehat{q}_n(x,y) - q(x,y)\big) \stackrel{\mathcal{D}}{\longrightarrow} \mathcal{N}\left(0,\frac{q(x,y)^2\tau^2}{p(x)(1+\alpha d)}\right) .$$
\end{theo}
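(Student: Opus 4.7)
The plan is to write
\[ \widehat{q}_n(x,y) - q(x,y) = \frac{N_n(x,y) - q(x,y)\,D_n(x)}{D_n(x)}, \]
with $N_n$ and $D_n$ the numerator and denominator of $\widehat{q}_n$. The consistency argument behind Theorem \ref{Q:PDMP:CVps} already provides $D_n(x)/(n+1) \tops p(x)$, so that by Slutsky's lemma it suffices to establish
\[ n^{-(1+\alpha d)/2}\bigl(N_n(x,y) - q(x,y)\,D_n(x)\bigr) \stackrel{\mathcal{D}}{\longrightarrow} \mathcal{N}\!\left(0,\, \frac{q(x,y)^2 \tau^2 p(x)}{1+\alpha d}\right). \]

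Writing $N_n - q D_n = \sum_{j=1}^{n+1} \Psi_j$ with
\[ \Psi_j = w_j^{-2d} K\!\left(\frac{Z_j^- - x}{w_j}\right) K\!\left(\frac{Z_j - y}{w_j}\right) - q(x,y)\, v_j^{-d} K\!\left(\frac{Z_j^- - x}{v_j}\right), \]
introduce the filtrations $\mathcal{G}_j = \sigma(Z_0, S_1, Z_1, \dots, S_j, Z_j^-)$ and $\mathcal{F}_j = \sigma(\mathcal{G}_j, Z_j)$, and split $\Psi_j = \Delta_j + B_j$ with $\Delta_j = \Psi_j - \esp[\Psi_j \mid \mathcal{G}_j]$ and $B_j = \esp[\Psi_j \mid \mathcal{G}_j]$. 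Because the conditional law of $Z_j$ given $\mathcal{G}_j$ is $Q(Z_j^-,\cdot) = q(Z_j^-,\cdot)\,\ud z$, the sequence $(\Delta_j)$ is a martingale difference sequence with respect to $(\mathcal{F}_j)$. A change of variables in the $K((Z_j-y)/w_j)$ integral and the continuity of $q$ at $(x,y)$ yield
\[ B_j = q(x,y)\left[ w_j^{-d} K\!\left(\frac{Z_j^- - x}{w_j}\right) - v_j^{-d} K\!\left(\frac{Z_j^- - x}{v_j}\right) \right] + R_j, \]
where $R_j$ is a Taylor remainder of order $w_j$. The predictable bracket of the martingale part is $\sum_j \esp[\Delta_j^2 \mid \mathcal{G}_j] = O(\sum_j w_j^{-2d}) = O(n^{1+2\beta d})$, which is $o(n^{1+\alpha d})$ since $4\beta < \alpha - 1/(2d)$ forces $2\beta d < \alpha d$; hence $\sum_j \Delta_j = o_P(n^{(1+\alpha d)/2})$. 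By the same reasoning, the $w_j$-part of $\sum_j B_j$ is $O_P(n^{(1+\beta d)/2})$, again negligible, and the cumulative deterministic bias $\esp[\sum_j \Psi_j]$ together with $\sum_j R_j$ is handled by the conditions $\alpha > 1/(2+d)$ and $2(1-\alpha d) < 4\beta$.

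The dominant stochastic term is therefore $-q(x,y) \sum_{j=1}^{n+1} v_j^{-d} K((Z_j^- - x)/v_j)$ recentred around its conditional mean, that is, a Wolverton--Wagner-type recursive sum based on the Markov chain $(Z_j^-)$. Apply a further martingale decomposition with respect to $(\mathcal{F}_{j-1})$ and the Hall--Heyde martingale central limit theorem. The two conditions to verify are (i) convergence in probability of the predictable bracket: using the pointwise asymptotic $\esp[(v_j^{-d} K((Z_j^- - x)/v_j))^2 \mid \mathcal{F}_{j-1}] = v_j^{-d} p(x) \tau^2 + o_P(v_j^{-d})$ (continuity of $p$ at $x$ and ergodicity of $(Z_j^-)$, both established in the proof of Theorem \ref{Q:PDMP:CVps}) together with $\sum_j v_j^{-d} \sim n^{1+\alpha d}/(1+\alpha d)$, a Cesaro argument yields the limit $p(x)\tau^2/(1+\alpha d)$; and (ii) the conditional Lindeberg condition, which is immediate because $K$ is bounded with compact support. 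Dividing by $D_n/(n+1) \tops p(x)$ and applying Slutsky concludes.

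The main obstacle is the joint control of the two bandwidth sequences. The upper bound $4\beta < \alpha - 1/(2d)$ keeps every $w_j$-contribution (both $\sum_j \Delta_j$ and the $w_j$-side of $\sum_j B_j$) strictly below the target rate $n^{(1+\alpha d)/2}$, while the lower bound $2(1-\alpha d) < 4\beta$ forces the deterministic Taylor bias of the $w_j$-kernel to vanish fast enough; the upper constraint $\alpha < 1/d$ ensures that the leading recursive variance $\sum_j v_j^{-d}$ has the normalization $n^{1+\alpha d}$. Perhaps counterintuitively, the fluctuations of $\widehat{q}_n - q$ are driven by the $v_j$-kernel of the denominator rather than by the $w_j$-kernel of the numerator: this is because $v_j \ll w_j$ asymptotically, so $v_j^{-d} K(\cdot)$ has the larger per-step variance and therefore dominates.
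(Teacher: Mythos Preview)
Your overall strategy coincides with the paper's: both isolate the fluctuation of the $v_j$-kernel sum (the denominator estimator $\widehat{p}_n(x)$) as the sole source of the Gaussian limit and show that every $w_j$-contribution is $o_P(n^{(1+\alpha d)/2})$. The paper writes $\widehat{q}_n-q=(\widehat{h}_n-h)/\widehat{p}_n-q(\widehat{p}_n-p)/\widehat{p}_n$ and handles the two pieces separately (Propositions~\ref{prop:vitessehn} and~\ref{tcl:pn}); your $L^2$ bound on the martingale $\sum_j\Delta_j$ is a legitimate, slightly more direct alternative to the paper's Van Ryzin argument for the first piece.

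The CLT step for the dominant $v_j$-part, however, has a real gap. Your filtration $\mathcal{F}_{j-1}$ contains $Z_{j-1}$, and conditional on $Z_{j-1}$ the pre-jump location $Z_j^-=\Phi_{Z_{j-1}}(S_j)$ is supported on the one-dimensional flow curve $\{\Phi_{Z_{j-1}}(s):s\ge0\}$, which has Lebesgue measure zero whenever $d\ge2$. The conditional law of $Z_j^-$ given $\mathcal{F}_{j-1}$ is therefore singular, and the formula $\esp[(v_j^{-d}K)^2\mid\mathcal{F}_{j-1}]=v_j^{-d}p(x)\tau^2+o_P(v_j^{-d})$ cannot hold: the left-hand side is either $0$ or of order $v_j^{1-2d}$ according to whether the curve enters $B(x,v_j\delta)$. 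The paper avoids this by running the martingale along the filtration of the chain $(Z_n^-)$ itself, whose transition kernel $\mathcal{R}$ \emph{does} admit a density $r$ on $E$ (Proposition~\ref{prop:Rdens}); the conditional second moment is then $v_{j+1}^{-d}\int K^2(u)\,r(Z_j^-,x+uv_{j+1})\,\ud u$, and the bracket converges only after Ces\`aro averaging plus the ergodic theorem applied to $r(Z_j^-,x)$. This route also leaves a predictable remainder $\sum_j\bigl(r(Z_j^-,x)-p(x)\bigr)$ whose control at order $o(n^{(1+\alpha d)/2})$ the paper does not derive from the standing hypotheses but imposes separately as Assumption~\ref{hyps:clt}; you do not mention this term. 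A smaller slip: the claim that ``the $w_j$-part of $\sum_j B_j$ is $O_P(n^{(1+\beta d)/2})$'' is false as stated, since $q(x,y)\sum_j w_j^{-d}K((Z_j^--x)/w_j)$ has mean of order $n$; only its centered version obeys such a bound.
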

\begin{proof}
The proof is stated at the end of Appendix \ref{s:clt}.
\end{proof}


\section{Simulation study}
\label{s:simu}

The goal of this section is to illustrate the asymptotic behavior of our recursive estimator via numerical experiments in the one-dimensional case. More precisely, we investigate numerical simulations for an application of PDMP's in a biological context. Our example deals with the behavior of the size of a cell over time, and is a particular growth fragmentation model (see \cite{KRELL}).

We consider a continuous-time process $(X_t)$ which models the size of a cell. This one grows exponentially in time, next the cell splits into two offsprings at a division rate $\lambda$ that depends on its size. We impose that the size can not exceed a certain threshold. In our application, the state space of $(X_t)$ is assumed to be $E=]0,3[$. For any $x\in E$, the deterministic flow $\Phi(x,t)$ is given, for any $t\geq0$, by
$$\Phi(x,t) = x \exp(\tau t),$$
where $\tau=0.9$ in the simulations. We assume that the inter-jumping times are distributed according to the Weibull distribution where the shape parameter is the inverse of the size of the cell. More precisely, for $x\in E$ and $t\geq0$, the conditional density $f(x,t)$ associated with the jump rate $\lambda$ is given by
$$ f(x,t) = \frac{t^{(1-x)/x}\exp\left(-t^{1/x}\right)}{x}.$$
The transition kernel $Q(x,\cdot)$ is chosen to be Gaussian with mean $x/2$, a small variance $\sigma^2$ and truncated to $]x/2\pm\sigma[\cap E$, with $\sigma=10^{-1}$ in our simulations. A trajectory of such a PDMP is given in Figure \ref{fig:trajectoire}.

%

In these numerical experiments, we begin with some investigations of the accuracy of the estimator $\widehat{q}_n(x,y)$, for $(x,y)=(1,0.5)$ and $(x,y)=(2,1)$, from different numbers $n$ of observed jumps. The chosen bandwidth parameters are $v_1=w_1=0.1$, $\alpha=0.125$ and $\beta=0.1$, associated with the Epanechnikov kernel. We present in Figure \ref{fig:boxplots} the boxplots of the estimates over $100$ replicates. The empirical distributions of the associated relative errors are given in Figure \ref{fig:boxplotsRE}. On small-sampled sizes, our procedure is quite unfulfilling. However, for $n$ large enough, our method succeeds in the pointwise estimation of the quantity of interest, especially when $n$ is greater than $10\,000$. In addition, the complete curves $q(x,\cdot)$, with $x=1$ and $x=2$, and their estimates from different numbers of observed jumps are presented in Figure \ref{fig:courbes}. One may observe the convergence of $\widehat{q}_n$ to the transition kernel of interest $q$. From $n=50\,000$ observed jumps, the estimation procedure performs very well.

%

The quality of the estimation is better for $(x,y)=(1,0.5)$ than for $(x,y)=(2,1)$ (see Figures \ref{fig:boxplotsRE} and \ref{fig:courbes}). Thanks to the estimation of the invariant distribution $\pi$ (see Figure \ref{fig:pi}), one may notice that the Markov chain $(Z_n^-)$ is more often around $x=1$ than $x=2$. As a consequence, the number of data for the estimation of $q(x,y)$ is larger for $x=1$ than for $x=2$. The behavior of the limit distribution $\pi$ of $(Z_n^-)$ is a good indicator of the accuracy of the estimator $\widehat{q}_n(x,y)$.
	
We investigate the choice of the bandwidth parameters in Figure \ref{fig:parameters}. We present the boxplots over $100$ replicates of the estimates of  $q(1,0.5)$ from $10\,000$ observed jumps, with different values of the parameters $\alpha$ and $\beta$. In a theoretical point of view, the almost sure convergence holds when $\alpha<1$. However, we observe that $\alpha=1/8$ and $\alpha=1/4$ are better choices than $\alpha=1/2$. According to Figure \ref{fig:parameters}, a good compromise for small numerical bias and variance is $\alpha=1/8$ and $\beta=0.1$.

It is tedious to determine numerically the bandwidth parameters $\alpha$ and $\beta$ for which we observe a central limit theorem for $\widehat{q}_n(x,y)$. An illustration of Theorem \ref{Q:PDMP:TCL} is given in Figure \ref{fig:TCL} from $50\,000$ observed jumps, with $\alpha=0.5$ and $\beta=0.1$.

\appendix

\section{Estimation of the invariant distribution of $(Z_n^-)$}
\label{s:s3}

The main objective of this section is the estimation of the invariant distribution of the Markov chain $(Z_n^-)$. This section is divided into two parts. In the first one, we are interested in the existence and the uniqueness of the invariant distribution of $(Z_n^-)$, and in the properties of its transition kernel $\mathcal{R}$. In the second part, we propose a recursive estimator of the invariant distribution of $(Z_n^-)$ and we investigate its asymptotic behavior.

\subsection{Some properties of $(Z_n^-)$}\label{ss:s31}
In this part, we focus on the process $(Z_n^-)$, which is a Markov chain on $\overline{E}$. We especially investigate its transition kernel $\mathcal{R}$ and the existence of an invariant measure.

	\begin{lem1} \label{lem:Znmoins}
	$(Z_n^-)$ is a Markov chain whose transition kernel $\mathcal{R}$ is given, for all $y\in E$ and $B\in\mathcal{B}(\overline{E})$, by
	\begin{equation}
	\label{expr:R}
	\mathcal{R}(y,B) = \int_E Q(y,\ud z) \mathcal{S}(z,\Phi_z^{-1}(B){\cap\R_+}) ,
	\end{equation}
	where the conditional distribution $\mathcal{S}$ has already been defined by $(\ref{expr:S})$.
	\end{lem1}
	\begin{proof}
	For all integer $n$, by $(\ref{expr:S})$, we have 
	\begin{align*}
	\prob(Z_{n+1}^-&\in B | Z_n=z , Z_n^-,\dots,Z_1^-)\\
	&=\, \prob(S_{n+1}\in\Phi_z^{-1}(B)\cap\R_+ | Z_n=z , Z_n^-,\dots,Z_1^-)\\
	&=\, \esp\left[\esp\big[\mathbf{1}_{\{S_{n+1}\in \Phi_z^{-1}(B)\cap\R_+\}} \big| Z_n=z , \sigma(Z_i,S_i\,:\,0\leq i\leq n)\big] \,\Big| Z_n=z, Z_n^-,\dots,Z_1^- \right] \\
	&=\,  \esp\left[ \mathcal{S}(z,\Phi_z^{-1}(B)\cap\R_+) \big| Z_n=z,Z_n^-,\dots,Z_1^-\right] \\
	&=\,\mathcal{S}(z,\Phi_z^{-1}(B)\cap\R_+).
	\end{align*}
	As a consequence,
	\begin{eqnarray*}
	\prob(Z_{n+1}^- \in B | Z_n^- , \dots,Z_1^-) &=&\int_E \prob(Z_{n+1}^-\in B| Z_n=z , Z_n^-,\dots,Z_1^-) Q(Z_n^-,\ud z) \\
	&=&\int_E \mathcal{S}(z,\Phi_z^{-1}(B)\cap\R_+) Q(Z_n^-,\ud z) ,
	\end{eqnarray*}
	which provides the result.
	\end{proof}

\noindent
We focus on the ergodicity of $(Z_n^-)$ by using Doeblin's assumption.

\begin{hyp}\label{hyp:mumeasure}
We assume that the transition kernel $\mathcal{R}$ satisfies Doeblin's condition (see \cite[page 396]{MandT} for instance), that is, there exist a probability measure $\mu$ on $(\overline{E},\mathcal{B}(\overline{E}))$, a real number $\varepsilon$ and an integer $k$ such that,
\begin{equation}
\label{minor:R}
\forall y\in\overline{E},~\forall B\in\mathcal{B}(\overline{E}),~\mathcal{R}^k(y,B) \geq \varepsilon\,\mu(B) .
\end{equation}
\end{hyp}

\noindent
Under this assumption, one may state that the Markov chain $(Z_n^-)$ is ergodic.

		\begin{prop1} \label{Znmoinsergodic}
		We have the following results.
		\vspace{-0.2cm}
		\begin{itemize}
		\item The Markov chain $(Z_n^-)$ is $\mu$-irreducible, aperiodic and admits a unique invariant measure, which we denote by $\pi$.
		\item There exist $\rho>1$ and $\kappa>0$ such that,
		\begin{equation} \label{eq:convTV}
		\forall n \geq 1,~ \sup_{\xi\in\overline{E}} \|\mathcal{R}^n(\xi,\cdot)-\pi\|_{TV} \leq \kappa \rho^{-n} ,
		\end{equation}		
		where $\|\cdot\|_{TV}$ stands for the total variation norm.
		\item In addition, $(Z_n^-)$ is positive Harris-recurrent.
		\end{itemize}
		\end{prop1}
\begin{proof}
By definition and the inequality $(\ref{minor:R})$, $(Z_n^-)$ is $\mu$-irre\-duci\-ble and aperiodic (see \cite[page 114]{MandT}). Moreover, on the strength of Theorem 16.0.2 of \cite{MandT}, $(Z_n^-)$ admits a unique invariant measure $\pi$ since it is aperiodic and $(\ref{eq:convTV})$ holds. In addition, from Theorem 4.3.3 of \cite{HLL}, $(Z_n^-)$ is positive Harris-recurrent.
\end{proof}


\begin{rem}
The ergodicity of the Markov chain $(Z_n^-)$ is often equivalent to the one of the post-jump locations $(Z_n)$. Doeblin's assumption may be related to the existence of a Foster-Lyapunov's function for instance (see \cite{MandT} for this kind of connection). Furthermore, Assumption \ref{hyp:mumeasure} is satisfied for a PDMP defined on a bounded state space, with Gaussian transitions and a strictly positive jump rate.
\end{rem}

Now, we shall impose some assumptions on the characteristics relative to the flow of the process. Under these new constraints, one may provide a more useful expression of $\mathcal{R}$. In the sequel, for $\xi\in E$, $t^-(\xi)$ denotes the deterministic exit time from $E$ for the reverse flow,
$$t^-(\xi) = \sup\{t<0\,:\, \Phi_x(t)\in\partial E\} ,$$
with the usual convention $\sup\emptyset=-\infty$. Remark that $t^-(\xi)$ is a negative number.

\begin{hyps} \label{hyps:flowww}~
\begin{enumerate}[(i)]
\vspace{-0.2cm}
\item The flow $\Phi$ is assumed to be $\mathcal{C}^1$-smooth. For any $(z,t)\in\R^{d}\times\R$, $D\Phi_z(t)$ is defined by
\begin{equation}
\label{dphi}
D\Phi_z(t) = \left| \det\,\left(      \frac{\partial \Phi_x^{(i)}(t)}{\partial x_j} \right)_{\!\!\!\scriptscriptstyle 1\leq i,j\leq d} \right| .
\end{equation}
\item For any $t\in\R$, $\varphi_t:\R^d\to\R^d$, defined by $\varphi_t(x)=\Phi_x(t)$, is an injective application.
\end{enumerate}
\end{hyps} 

\noindent
A useful expression of the transition kernel $\mathcal{R}$ is stated in the following proposition.

	\begin{prop1} 
	\label{prop:Rdens}
	Let $y\in E$ and $B\in\mathcal{B}(\overline{E})$. We have
	\begin{equation} \label{decompoR}
	\mathcal{R}(y,B) =  \int_{B\cap E} r(y,z)\ud z  ~ + ~ \mathcal{R}(y,B\cap\partial E) ,
	\end{equation}
	where the conditional density function $r$ is given by,
	\begin{equation} \label{expr:r:cool}
	\forall z\in E,~r(y,z) = \int_0^{-t^-(z)} q(y,\Phi_z(-s)) f(\Phi_z(-s),s) D\Phi_z(-s)\ud s .
	\end{equation}
	\end{prop1}
\begin{proof}
Let $B\subset E$. First, we fix $t\geq0$. We define the set $A_t$ by
$$A_t = \{\Phi_\xi(-t)\,:\, \xi\in B\},$$
and we examine the function $\varphi_t:A_t\to B$ defined by $\varphi_t(x)=\Phi_x(t)$, $x\in A_t$. $\varphi_t$ is a $\mathcal{C}^1$-smooth injective application (see Assumptions \ref{hyps:flowww}). Furthermore, for any $z\in B$,
$$\varphi_t(  \Phi_z(-t) ) = \Phi_{\Phi_z(-t)}(t) = z ,$$
with $\Phi_z(-t)\in A_t$ by definition of $A_t$. Consequently, $\varphi_t$ is a $\mathcal{C}^1$-one-to-one correspondence. The inverse function $\varphi_t^{-1}$ is given by $\varphi_t^{-1}(x) = \Phi_x(-t)$, so it is $\mathcal{C}^1$-smooth too. Thus, $\varphi_t$ is a $\mathcal{C}^1$-diffeomorphism from $A_t$ into $B$, which allows us to consider it as a change of  variable. In particular, this shows the relation
\begin{equation}\label{eq:equiv}
 \left(z\in E,\,t\in\mathbf{R}_+, \Phi_z(t)\in B\right)    ~\Leftrightarrow~ \left(t\in\R_+,\,z\in E,\,z\in A_t\right) .
 \end{equation}
Moreover, the Jacobian matrix $\mathbf{J}_{\varphi_t^{-1}}$ of the inverse function $\varphi_t^{-1}$ satisfies,
$$\forall x\in\R^d,~\mathbf{J}_{\varphi_t^{-1}} (x)  =    \left(      \frac{\partial \Phi_x^{(i)}(-t)}{\partial x_j} \right)_{\!\!\!\scriptscriptstyle 1\leq i,j\leq d}    .$$
By $(\ref{expr:S})$ and $(\ref{expr:R})$, we have
$$\mathcal{R}(y,B) = \int_E \int_{\Phi_z^{-1}(B)\cap\R_+} q(y,z)f(z,t)\ud t .$$
Together with $(\ref{eq:equiv})$, we obtain
$$\mathcal{R}(y,B) = \int_{\R_+} \left( \int_{A_t} \mathbf{1}_E(z) f(z,t) q(y,z) \ud z\right) \ud t .$$
By the change of variable $\varphi_t$, we have
\begin{eqnarray*}
\mathcal{R}(y,B)     	&=&		 \int_{\R_+}   \left(   \int_B  \mathbf{1}_E \left( \varphi_t^{-1}(\xi)\right) f\left(   \varphi_t^{-1}(\xi) , t\right)  q\left(y ,  \varphi_t^{-1}(\xi)\right)   \left| \det\, \mathbf{J}_{\varphi_t^{-1}}(\xi) \right| \ud \xi \right) \ud t \\
&=&  \int_{\R_+}   \left(   \int_B  \mathbf{1}_E \left( \Phi_\xi(-t) \right) f\left(   \Phi_\xi(-t) , t\right)  q\left(y ,  \Phi_\xi(-t) \right)  D\Phi_\xi(-t) \ud \xi \right) \ud t ,
\end{eqnarray*}
where $D\Phi_\xi$ is defined by $(\ref{dphi})$. We remark that
$$ \left( \Phi_\xi(-t) \in E \right) \Leftrightarrow \left( 0 \leq t < - t^\star(\xi)\right) ,$$
so $\mathbf{1}_E (\Phi_\xi(-t)) = \mathbf{1}_{\{0 \leq t < - t^\star(\xi)\}}$. By Fubini's theorem, this yields to the expected result.
\end{proof}

\noindent
In the light of this result, one may obtain the following one about the invariant distribution $\pi$ of the Markov chain $(Z_n^-)$: $\pi$ admits a density with respect to the Lebesgue measure in the interior of the state space. In addition, one may exhibit a link between this density and $r$.

	\begin{cor1} \label{cor:h}
	There exists a non-negative function $p$ such that
	\begin{equation}
	\label{eq:pi_dens}
	\forall B\in\mathcal{B}(\overline{E}),~\pi(B) =  \int_{B\cap E} p(x)\ud x ~+~\pi(B\cap\partial E) .
	\end{equation}
	In addition, $p$ is given by the expression,
	\begin{equation}
	\label{eq:exprh}
	\forall x\in E,~p(x) = \int_{\overline{E}} \pi(\ud y) r(y,x) .
	\end{equation}
	\end{cor1}
\begin{proof}
$\mu$ is an irreducibility measure for $\mathcal{R}$. As a consequence and according to Proposition 4.2.2 of \cite{MandT}, the maximal irreducibility measure $\widetilde{\mu}$ is equivalent to the measure $\widetilde{\mu}'$ given for any $B\in\mathcal{B}(\overline{E})$, by
$$\widetilde{\mu}'(B) =  \int_{\overline{E}} \mu(\ud y) \sum_{n\geq0} \mathcal{R}^n(y,B)\frac{1}{2^{n+1}} .$$
$\mathcal{R}$ admits a density on the interior $E$ of the state space $\overline{E}$ of the Markov chain $(Z_n^-)$ (see Proposition \ref{prop:Rdens}). As a consequence, this is the case for $\mathcal{R}^n$, too. Indeed, for any set $B$ such that $\lambda_d(B\cap E)=0$, we have
$$\mathcal{R}^n(y,B\cap E) = \int_{\overline{E}} \mathcal{R}^{n-1}(y,\ud z) \mathcal{R}(z,B\cap E) = 0 .$$
Therefore, $\widetilde{\mu}'(B\cap E)=0$. Finally,
$$\lambda_d(B\cap E) =0~\Rightarrow~\widetilde{\mu}(B\cap E)=0 .$$
Since $\pi$ and the maximal irreducibility measure $\widetilde{\mu}$ are equivalent, $\pi$ admits a density on $E$. Now, we investigate the expression of this density. By Fubini's theorem, we have for any $B\subset E$,
\begin{eqnarray*}
\pi(B) &=&\int_{\overline{E}}\pi(\ud y) \mathcal{R}(y,B) \\
&=& \int_{\overline{E}} \pi(\ud y) \int_B r(y,x)\ud x \\
&=&\int_B \left( \int_{\overline{E}} \pi(\ud y) r(y,x)\right) \ud x .
\end{eqnarray*}
As a consequence, one may identify $p$ with the function $\displaystyle\int_{\overline{E}} \pi(\ud y) r(y,\cdot)$.
\end{proof}

\noindent
One shall see that the regularity of the conditional probability density function $r$ is significant in all the sequel. We state that under additional assumptions, $r$ is Lipschitz.


\begin{hyps} \label{hyps:regularity01}
We assume the following statements.
\vspace{-0.3cm}
\begin{enumerate}[(i)]
\item $t^-$ is a bounded and Lipschitz function, that is,
$$\exists [t^-]_{Lip}>0 ,~\forall x, y\in E,~ \left|t^-(x) - t^-(y)\right|\leq [t^-]_{Lip}|x-y| .$$
\item The flow $\Phi$ is Lipschitz, that is,
$$\exists [\Phi]_{Lip}>0,~\forall x,y \in\R^d,~\forall t\in\R,~\left| \Phi_x(t) - \Phi_y(t)\right| \leq [\Phi]_{Lip} |x-y| .$$
\item $f$ is a bounded and Lipschitz function, that is,
$$\exists [f]_{Lip}>0,~\forall x,y \in\R^d,~\forall t\in\R,~ \left| f(x,t) - f(y,t)\right| \leq [f]_{Lip}|x-y| .$$
\item $q$ is a bounded and Lipschitz function, that is, there exists $[q]_{Lip}>0$ such that, for any $x,y,z\in\R^d$,
$$\textcolor{black}{\left| q(x,y) - q(x,z)\right| \leq [q]_{Lip} |y-z| \quad\text{and}\quad\left|q(x,z)-q(y,z)\right|\leq[q]_{Lip}|x-y|.}$$
\item $D\Phi$ is a bounded and Lipschitz function, that is,
$$\exists [D\Phi]_{Lip}>0,~\forall x,y\in\R^d,~\forall t\in\R,~ \left|D\Phi_x(t)  -  D\Phi_y(t)\right| \leq [D\Phi]_{Lip} |x-y| .$$
\end{enumerate}
\end{hyps}

	\begin{prop1} \label{prop:propr}
	$r$ is a bounded function. Furthermore, there exists a constant $[r]_{Lip}>0$ such that, for any $x\in\overline{E}$, $y\in E$ and $u\in\R^d$ such that $y+u\in E$, we have
	$$|r(x,y+u)-r(x,y)|\leq[r]_{Lip}|u| .$$
	\end{prop1}

\begin{proof}
First, we have from $(\ref{expr:r:cool})$,
$$\|r\|_{\infty} \leq \|t^-\|_{\infty} \|q\|_{\infty} \|f\|_{\infty} \|D\Phi\|_{\infty}.$$
For the second point, we consider the function $\gamma$ defined by,
$$\forall (y,t)\in\R^d\times\R,~\gamma(y,t)= q(x,\Phi_y(-t)) f(\Phi_y(-t),t) D\Phi_y(-t) .$$
This function is Lipschitz as a compound and product of Lipschitz functions (see Assumptions \ref{hyps:regularity01}). $[\gamma]_{Lip}$ stands for its Lipschitz constant, and we have
$$\left|\gamma(y,t) - \gamma(y+u,t)\right| \leq [\gamma]_{Lip} |u| .$$
 In addition, by $(\ref{expr:r:cool})$, the function $r(x,y)$ is given by
$$r(x,y) =  \int_0^{-t^-(y)} \gamma(y,s)\ud s .$$
We suppose that $-t^-(y)\leq -t^-(y+u)$ (recall that $t^-$ is a negative function). We have
$$r(x,y+u)  -  r(x,y)    =   \int_0^{-t^-(y)} \left( \gamma(y+u,s) - \gamma(y,s)\right) \ud s + \int_{-t^-(y)}^{-t^-(y+u)} \gamma(y+u,s)\ud s.$$
As a consequence,
\begin{eqnarray*}
\left| r(x,y+u) - r(x,y)\right| &\leq& \int_0^{\|t^-\|_{\infty}} \left| \gamma(y+u,s) - \gamma(y,s)\right| \ud s \\
&~&~+ ~  \|q\|_{\infty}\|f\|_{\infty}\|D\Phi\|_{\infty} \left| t^-(y) - t^-(y+u)\right|   \\
&\leq& \|t^-\|_{\infty} [\gamma]_{Lip} |u| + [t^-]_{Lip} \|q\|_{\infty}\|f\|_{\infty}\|D\Phi\|_{\infty}  |u|.
\end{eqnarray*}
The obtained inequality for $-t^-(y)>-t^-(y+u)$ is exactly the same one. This achieves the proof.
\end{proof}

\subsection{Estimation of $p$}
\label{ss:s32}

We propose a recursive nonparametric estimator of the function $p$ given in the Corollary \ref{cor:h}. For all integer $n$, the recursive estimator $\widehat{p}_n$ of $p$ that we propose is given for all $x\in E$ by
\begin{equation} \label{eq:exprpn}
\widehat{p}_n(x) = \frac{1}{n} \sum_{j=1}^{n+1} \frac{1}{v_j^d} K\left(\frac{Z_j^- - x}{v_j}\right) ,
\end{equation}
where the bandwith $v_j$ satisfies
$$v_j =  v_1 j^{-\alpha},~\text{with $\alpha>0$} .$$

	\begin{rem} \label{rem:inclusion}
	Let $x\in E$ and $j\geq1$. Since the sequence $(v_n)$ is decreasing, we have
	$$\text{\normalfont supp}\,K\left( \frac{\cdot - x}{v_j}\right) \subset \text{\normalfont supp}\, K\left( \frac{\cdot - x}{v_1}\right)\,\subset\,B(x,v_1{\delta}) .$$
	Thus, if $v_1{\delta} < \text{\normalfont dist}(x,\partial E)$, we have
	$$\text{\normalfont supp}\,K\left(\frac{\cdot-x}{v_j}\right) \subset E .$$
	\end{rem}

\noindent
In the following proposition, we establish the pointwise asymptotic consistency of $\widehat{p}_n$.

	\begin{prop1} \label{prop:pchapeaun}
	Let $x\in E$. One chooses $v_1$ such that $v_1{\delta} <\text{\normalfont dist}(x,\partial E)$ and $\alpha$ such that $\alpha d<1$. Then,
	$$\widehat{p}_n(x) \stackrel{a.s.}{\longrightarrow} p(x) ,$$
	when $n$ goes to infinity.
	\end{prop1}

\begin{proof}
By the expression of $p(x)$ given by $(\ref{eq:exprh})$, the difference $\widehat{p}_n(x)-p(x)$ may be written in the following way,
\begin{eqnarray}
\widehat{p}_n(x) - p(x) &=& \frac{1}{n}\sum_{j=1}^{n+1} \frac{1}{v_j^d} K\left(\frac{Z_j^- - x}{v_j}\right) - \int_{\overline{E}}  r(u,x) \pi(\ud u)\nonumber \\
&=& \frac{1}{nv_1^d} K\left(\frac{Z_1^- - x}{v_1}\right) + \frac{1}{n}M_n + R_n^{(1)} + R_n^{(2)} , \label{eq:decompoh}
\end{eqnarray}
where $M_n$, $R_n^{(1)}$ and $R_n^{(2)}$ are given by
\begin{eqnarray}
M_n &=& \sum_{j=1}^{n} \left[ \frac{1}{v_{j+1}^d}K\left(\frac{Z_{j+1}^- - x}{v_{j+1}}\right) - \int_{\R^d} r(Z_j^- , x+yv_{j+1}) K(y)\ud y \right] , \label{eq:decompo:Mn}\\
R_n^{(1)} &=& \frac{1}{n}\sum_{j=1}^n \int_{\R^d} \left[ r(Z_j^- , x+yv_{j+1})   -   r(Z_j^-,x)\right]K(y)\ud y, \label{eq:decompo:Rn1}\\
R_n^{(2)} &=& \frac{1}{n}\sum_{j=1}^n r(Z_j^- , x)   -   \int_{\overline{E}} r(u,x)\pi(\ud u) . \label{eq:decompo:Rn2}
\end{eqnarray}
The dependency on $x$ is implicit. In $(\ref{eq:decompoh})$, the first term clearly tends to $0$ as $n$ goes to infinity. The sequel of the proof is divided into three parts: in the first one, we show that $R_n^{(2)}$ tends to $0$ by the ergodic theorem. In the second one, we focus on $R_n^{(1)}$ and we prove that this term goes to $0$. Finally, we state that $M_n/n$ tends to $0$ by using the second law of large numbers for martingales. Recall that the Markov chain $(Z_n^-)$ is positive Harris-recurrent with invariant measure $\pi$, according to Proposition \ref{Znmoinsergodic}. Thus, one may apply the ergodic theorem (see for instance Theorem 17.1.7 of \cite{MandT}) and we obtain that $R_n^{(2)}$ almost surely tends to $0$. For $R_n^{(1)}$, we have
\begin{eqnarray}
\left|R_n^{(1)}\right| &\leq & \frac{1}{n}\sum_{j=1}^n \int_{\R^d} \left| r(Z_j^- , x+yv_{j+1}) - r(Z_j^- , x) \right| K(y)\ud y \nonumber\\
&\leq& \frac{1}{n}\sum_{j=1}^n \int_{\R^d} [r]_{Lip} |y| v_{j+1} K(y)\ud y \nonumber\\
&\leq& \frac{1}{n} \sum_{j=1}^n {v_{j+1}} \left( \int_{\R^d} |y|K(y)\ud y\right) [r]_{Lip} . \label{eq:decompo:majorRn1}
\end{eqnarray}
This upper bound tends to $0$ by Cesaro's lemma because the limit of the sequence $(v_n)$ is $0$. Therefore, $R_n^{(1)}$ goes to $0$ as $n$ tends to infinity. Finally, we investigate the term $M_n/n$. First, we show that the process $(M_n)$ is a discrete-time martingale with respect to the filtration $(\mathcal{F}_n)$ defined by,
$$\forall n\geq1,~\mathcal{F}_n = \sigma(Z_1^- , \dots, Z_{n+1}^-) .$$
We have
\begin{equation*}
\esp\left[{M_{n}}|{\mathcal{F}_{n-1}}\right] = {M_{n-1}} + \esp\left[\frac{1}{v_{n+1}^d} K\left(\frac{Z_{n+1}^- - x}{v_{n+1}}\right)\Bigg|Z_n^-\right] - \int_{\R^d}r(Z_n^-,x+yv_{n+1})K(y)\ud y .
\end{equation*}
Thus, we only have to prove that
\begin{equation}
\label{eq:toprove}
\esp\left[\frac{1}{v_{n+1}^d} K\left(\frac{Z_{n+1}^- -x}{v_{n+1}}\right)\Bigg|Z_n^-\right] = \int_{\R^d}r(Z_n^-,x+yv_{n+1})K(y)\ud y .
\end{equation}
We have
\begin{equation*}
 \esp\left[\frac{1}{v_{n+1}^d} K\left(\frac{Z_{n+1}^- - x}{v_{n+1}}\right)\Bigg|Z_n^-\right] = \frac{1}{v_{n+1}^d} \int_{\overline{E}} K\left( \frac{u-x}{v_{n+1}}\right) \mathcal{R}(Z_n^- , \ud u) .
 \end{equation*}
By the assumption on $v_1$ and Remark \ref{rem:inclusion},
\begin{eqnarray*}
\int_{\overline{E}} K\left( \frac{u-x}{v_{n+1}}\right) \mathcal{R}(Z_n^- , \ud u)  & =&\int_E K\left( \frac{u-x}{v_{n+1}}\right) \mathcal{R}(Z_n^-,\ud u) \\
&=& \int_E K\left( \frac{u-x}{v_{n+1}}\right) r(Z_n^- , u)\ud u ,
\end{eqnarray*}
by $(\ref{decompoR})$. Finally, the change of variable $u=yv_{n+1}+x$ states $(\ref{eq:toprove})$. Thus, $(M_n)$ is a martingale. We shall study the asymptotic behavior of its predictable quadratic variation $\langle M\rangle$. A straightforward calculus leads to
\begin{eqnarray*}
(M_n - M_{n-1})^2 &=& \frac{1}{v_{n+1}^{2d}} K^2\left( \frac{Z_{n+1}^- - x}{v_{n+1}}\right) +  \left[ \int_E r(Z_n^- , x+yv_{n+1})K(y)\ud y\right]^2\\
~&~&~{-} \frac{2}{v_{n+1}^d} K\left(\frac{Z_{n+1}^--x}{v_{n+1}}\right) \int_E r(Z_n^- , x+yv_{n+1})K(y)\ud y .
\end{eqnarray*}
Using the method used to show $(\ref{eq:toprove})$, we deduce that
\begin{eqnarray}
\esp\left[ (M_n-M_{n-1})^2|\mathcal{F}_{\textcolor{black}{n-1}}\right] &=&\frac{1}{v_{n+1}^d} \int_E K^2(y) r(Z_n^-,x+yv_{n+1})\ud y \label{eq2:crochetMn}\\
&&~-\left[ \int_E r(Z_n^- , x+y v_{n+1})K(y)\ud y\right]^2 . \nonumber
\end{eqnarray}
As a consequence, there exists a constant $C>0$ such that
$$\langle M\rangle_n \,\leq\, \sum_{j=1}^n \left( \frac{1}{v_{j+1}^d} \|r\|_{\infty}\tau^2 + \|r\|_{\infty}\right)\,\sim\,C\hspace{0.01cm}n^{\alpha d+1}~a.s.$$
when $n$ tends to infinity. By the second law of large numbers for martingales (see Theorem 1.3.15 of \cite{MD}), we have
$$M_n^2 = \mathcal{O}\left(\langle M\rangle_n \ln(\langle M\rangle_n)^{1+\gamma}\right)~a.s.$$
with $\gamma>0$. As a consequence,
$$ \frac{M_n}{n} = \mathcal{O}\left(\sqrt{n^{\alpha d-1}\ln(n^{\alpha d+1})^{1+\gamma}}\right) ~a.s.$$
Thus, $M_n/n$ almost surely tends to $0$ as $n$ goes to infinity if $\alpha d<1$. This achieves the proof.
\end{proof}

\section{Estimation of the invariant distribution of $(Z_n^-,Z_n)$}
\label{s:s4}

In this section, we state that the Markov chain $(Z_n^-,Z_n)$ admits a unique invariant measure. In addition, we are interested in the recursive estimation of this measure. We prove the almost sure convergence of $\widehat{q}_n(x,y)$ given in Theorem \ref{Q:PDMP:CVps} at the end of this section.

\subsection{Some properties of $(Z_n^-,Z_n)$}
\label{ss:s41}

In this part, we focus on the asymptotic behavior of the chain $(Z_n^-,Z_n)$. In all the sequel, $\eta_n$ (respectively $\pi_n$) denotes the distribution of $(Z_n^- , Z_n)$ (resp. $Z_n^-$) for all integer $n$. We have these straightforward relations between $\eta_n$, $Q$ or $q$ and $\pi_n$,
\begin{eqnarray}
\eta_n(A\times B) &=& \int_A Q(z,B)\pi_n(\ud z)  \nonumber\\
&=&\int_{A\times B} q(z,y)\pi_n(\ud z)\ud y . \label{eq:etan}
\end{eqnarray}

	\begin{lem1}
		We have
		$$\lim_{n\to+\infty} \|\eta_n - \eta\|_{TV} = 0 ,$$
		where the limit distribution $\eta$ is defined for all $A\times B\in\mathcal{B}(\overline{E}\times E)$ by
		\begin{equation}\label{eq:eta}\eta(A\times B) = \int_{A\times B} q(z,y) \pi(\ud z)\ud y .\end{equation}
	\end{lem1}
\begin{proof}
Let $g$ be a measurable function bounded by $1$. By virtue of Fubini's theorem, we have
\begin{equation*}
\left| \int_{\overline{E}\times E} g(x,y) \left(\eta_n(\ud x \times \ud y) - \eta(\ud x\times\ud y)\right)\right| \leq\left| \int_{\overline{E}} (\pi_n(\ud x)-\pi(\ud x)) \int_E g(x,y) q(x,y)\ud y \right| ,
\end{equation*}
from $(\ref{eq:etan})$ and $(\ref{eq:eta})$. Thus,
\begin{equation*}
\left| \int_{\overline{E}\times E} g(x,y) \left(\eta_n(\ud x \times \ud y) - \eta(\ud x\times\ud y)\right)\right| \leq \left| \int_{\overline{E}} \widetilde{g}(x) (\pi_n(\ud x)-\pi(\ud x)) \right|,
\end{equation*}
where the function $\widetilde{g}:x\mapsto\int_E g(x,y)q(x,y)\ud y$ is bounded by $1$ since $g$ is bounded by $1$ and $q$ is the conditional density associated with the Markov kernel $Q$. As a consequence,
\begin{equation}\label{eq:convTV-bis}
\left\|\eta_n-\eta\right\|_{TV}\leq\left\|\pi_n-\pi\right\|_{TV} .
\end{equation}
One obtains the expected limit from $(\ref{eq:convTV})$.
\end{proof}

\noindent
In addition, one may prove that $\eta$ admits a density on $E\times E$.

	\begin{lem1}
		There exists a positive function $h$ such that
		$$\eta(A\times B) = \int_{A\times B} h(x,y)\ud x \,\ud y ,$$
		for any $A\times B\in\mathcal{B}(\overline{E}\times E)$ with $A\subset E$. In addition, $h$ is given for all $x,y\in E$ by
		\begin{equation} \label{eq:hpq}
		h(x,y) = p(x) q(x,y).
		\end{equation}
	\end{lem1}
\begin{proof}
From $(\ref{eq:eta})$, we have
\begin{eqnarray*}
\eta(A\times B) &=& \int_{A\times B} q(z,y)\pi(\ud z)\ud y \\
&=& \int_{A\times B} q(z,y) p(z)\ud z \,\ud y,
\end{eqnarray*}
by $(\ref{eq:pi_dens})$ and because $A\subset E$. This achieves the proof.
\end{proof}

\subsection{Estimation of  $h$}
\label{ss:s42}

We propose to estimate the function $h$ by the recursive nonparametric estimator $\widehat{h}_n$ given, for any $(x,y)\in E^2$, by
\begin{equation} \label{eq:exprhn}
\widehat{h}_n(x,y) = \frac{1}{n}\sum_{j=1}^{n+1} \frac{1}{w_j^{2d}} K\left(\frac{Z_j^- - x}{w_j}\right) K\left(\frac{Z_j-y}{w_j}\right) ,
\end{equation}
where the bandwith $w_j$ is given by
$$w_j = w_1 j^{-\beta},~\text{with $\beta>0$}.$$
In the sequel, we are interested in the pointwise convergence of the estimator at a point $(x,y)\in E^2$. We assume that $w_1$ is such that $w_1\delta <\text{dist}(x,\partial E)$, where $\delta$ is the radius of the open ball which contains the support of the kernel function $K$. In this case, Remark \ref{rem:inclusion} is still valid, and we have the following inclusions, for any integer $j$,
\begin{equation} \label{eq:inclusionsK}
\text{supp}\,K\left(\frac{\cdot - x}{w_j}\right) \subset B(x,w_1\delta) \subset E .
\end{equation}
Our main objective is to state in Proposition \ref{prop:hchapeaun} that $\widehat{h}_n(x,y)$ almost surely converges to $h(x,y)$. First, we show that this estimator is asymptotically unbiased (see Proposition \ref{prop:asymptoticunbiased}).

We state some new properties of the distribution measures $\pi_n$ and $\pi$. Let us recall that $\pi_n$ is the law of $Z_n^-$, while $\pi$ is the invariant measure of the Markov chain $(Z_n^-)$.

\begin{lem1} \label{lem:pin}
We have the following statements.
\vspace{-0.2cm}
\begin{itemize}
\item For any integer $n$, $\pi_n$ admits a density function $p_n$ on $E$.
\item $p_n$ is bounded by $\|r\|_{\infty}$ and is an $[r]_{Lip}$-Lipschitz function.
\item $p$ is Lipschitz.
\item For any integer $n$, we have
	\begin{equation}
		\label{eq:convdensity}
		\sup_{x\in E} \left|p_n(x) - p(x)\right| \leq \|r\|_{\infty}\kappa\rho^{-(n-1)} .
	\end{equation}
\end{itemize}
\end{lem1}

\begin{proof}
For the first point, let $B\in\mathcal{B}(\overline{E})$ with $B\subset E$. We have
\begin{eqnarray*}
\pi_n(B)	&=& \int_{\overline{E}}\int_B \mathcal{R}(\xi,\ud y) \pi_{n-1}(\ud \xi)  \\
		&=& \int_B\int_{\overline{E}} r(\xi,y) \pi_{n-1}(\ud\xi) \ud y,
\end{eqnarray*}
where $r$ is the conditional density associated with the kernel $\mathcal{R}$ (see $(\ref{decompoR})$). Thus, one may identify
\begin{equation}\label{eq:pny}
p_n(y)=\int_{\overline{E}} r(\xi,y)\pi_{n-1}(\ud\xi).
\end{equation}
For the second assertion, we have stated in Proposition \ref{prop:propr} that $r$ is a bounded function. As a consequence,
$$\left|p_n(y)\right| \,\leq\, \|r\|_{\infty} \pi_{n-1}(\overline{E}) \,=\,\|r\|_{\infty} .$$
In addition, since $r$ is Lipschitz,
\begin{eqnarray*}
|p_n(y)-p_n(z)| &\leq& \int_{\overline{E}} \left|r(\xi,y)-r(\xi,z)\right|\pi_{n-1}(\ud\xi) \\
~&\leq &\qquad [r]_{Lip} |y-z| \pi_{n-1}(\overline{E}) \qquad=\quad [r]_{Lip} |y-z| .
\end{eqnarray*}
For the third point, $p$ is Lipschitz for the same reason than $p_n$ since $p$ satisfies $(\ref{eq:exprh})$. Finally, for the last point, we have, by $(\ref{eq:exprh})$ and $(\ref{eq:pny})$,
\begin{eqnarray*}
\left|p_n(x) - p(x)\right| &\leq&\int_{\overline{E}} r(y,x) \left|\pi_{n-1}(\ud y)-\pi(\ud y)\right| \\
&\leq& \textcolor{black}{\|r\|_{\infty} \|\pi_{n-1}-\pi\|_{TV}} \\
&\leq &\|r\|_{\infty} \kappa\rho^{-(n-1)} ,
\end{eqnarray*}
by $(\ref{eq:convTV})$. This achieves the proof.
\end{proof}

\noindent
Now, one may state that $\widehat{h}_n(x,y)$ is an asymptotically unbiased estimator of $h(x,y)$.
\begin{prop1}\label{prop:asymptoticunbiased}
When $n$ goes to infinity,
$$\mathbf{E}\left[ \widehat{h}_n(x,y)\right] \to h(x,y) .$$
\end{prop1}

\begin{proof}
We only state that
$$\mathbf{E}\left[\widehat{h}_n(x,y)\right] - \frac{n+1}{n}h(x,y)$$
tends to $0$. We have
\begin{eqnarray*}
\mathbf{E}\left[\widehat{h}_n(x,y)\right] - \frac{n+1}{n}h(x,y)\!\!&=&\!\!\frac{1}{n}\sum_{j=1}^{n+1} \Bigg[ \int_{\overline{E}\times E} \frac{1}{w_j^{2d}}K\left(\frac{u-x}{w_j}\right) K\left(\frac{v-y}{w_j}\right) \pi_j(\ud u) q(u,v)\ud v \\
&~&\qquad\qquad\qquad - \int_{E\times E} h(x,y) K(u) K(v)\ud u\ud v \Bigg].
\end{eqnarray*}
Thanks to $(\ref{eq:inclusionsK})$, one may replace $\overline{E}$ by $E$ in the first integral. As a consequence, one may replace $\pi_j(\ud u)$ by $p_j(u)\ud u$ (see Lemma \ref{lem:pin}). Together with $(\ref{eq:hpq})$ and a change of variables, we obtain
\begin{align*}
\mathbf{E}&\left[\widehat{h}_n(x,y)\right] - \frac{n+1}{n}h(x,y)\\
&= \frac{1}{n}\sum_{j=1}^{n+1} \int_{E\times E} K(u)K(v) \Big( p_j(x+uw_j) q(x+uw_j , y+vw_j) - p(x)q(x,y)\Big) \ud u\,\ud v .
\end{align*}
Furthermore, since $p_j$ is $[r]_{Lip}$-Lipschitz and bounded by $\|r\|_{\infty}$ in the light of Lemma \ref{lem:pin}, an elementary calculus leads to
\textcolor{black}{\begin{eqnarray*}
\big| p_j(x+uw_j)\!\!\!\!&&\!\!\!\!\!\!\!\!\!\!q(x+uw_j , y+vw_j) - p(x)q(x,y)\big|\\
&\leq& \|r\|_{\infty}[q]_{Lip} w_j \left(|u|+|v|\right)+ \|q\|_{\infty} [r]_{Lip} |u|w_j + \|q\|_{\infty} \left|p_j(x) - p(x)\right| \\
&\leq& w_j\Big( |u|   \big( \|r\|_{\infty}[q]_{Lip}+\|q\|_{\infty} [r]_{Lip}\big) + |v| \|r\|_{\infty}[q]_{Lip}\Big) + \|q\|_{\infty} \|r\|_{\infty} \kappa\rho^{-(j-1)},
\end{eqnarray*}}
together with $(\ref{eq:convdensity})$. Finally, we obtain
\begin{align}
\Big|\mathbf{E}&\left[\widehat{h}_n(x,y)\right] - \frac{n+1}{n}h(x,y)\Big| \nonumber\\
&\leq \frac{\left(\textcolor{black}{2}\|r\|_{\infty}[q]_{Lip}+\|q\|_{\infty}[r]_{Lip}\right)\int_E K(u)|u|\ud u}{n}\sum_{j=1}^{n+1} w_j + \frac{\|r\|_{\infty}\|q\|_{\infty}\kappa}{n} \sum_{j=1}^{n+1}\rho^{-(j-1)}\nonumber \\
&\leq \frac{\left(\textcolor{black}{2}\|r\|_{\infty}[q]_{Lip}+\|q\|_{\infty}[r]_{Lip}\right)\int_E K(u)|u|\ud u}{n}\sum_{j=1}^{n+1} w_j + \frac{\|r\|_{\infty}\|q\|_{\infty}\kappa}{n(1-\rho^{-1})}, \label{eq:biais:maj}
\end{align}
which tends to $0$ by Cesaro's lemma.
\end{proof}

In the following, we are interested in some properties of the discrete-time process
\begin{equation}\label{eq:exprAn}
(A_n) = \left( \frac{1}{w_n^{2d}} K\left(\frac{Z_n^- - x}{w_n}\right) K\left(\frac{Z_n-y}{w_n}\right)\right) ,
\end{equation}
which naturally appears in the study of the estimator $\widehat{h}_n(x,y)$. In particular, we propose to investigate its autocovariance function. On the strength of this result, we will establish the asymptotic behavior of the variance of $\widehat{h}_n(x,y)$.

\begin{prop1} \label{prop:varcov}
There exist two constants $B$ and $b>1$ such that, for any integers $n\geq k$,
\begin{equation}
\big|\cov(A_k,A_n)\big| \leq \frac{\|K\|_{\infty}^4 B}{w_n^{4d}} b^{k-n}\left(1+b^{-k}\right). \label{eq:cov}
\end{equation}
In particular, one obtains by taking $k=n$ and using that $b^{-n}\leq 1$,
\begin{equation}\label{eq:varianceAn}
\var(A_n) \leq \frac{2\|K\|_{\infty}^4B}{w_n^{4d}}.
\end{equation}
\end{prop1}
\begin{proof}
We have
\begin{align*}
&\cov(A_k,A_n) \\
&\,=\!\frac{\|K\|_{\infty}^4}{w_n^{2d}w_k^{2d}}\cov\!\left(\!\frac{1}{\|K\|_{\infty}^2}\!K\!\left(\frac{Z_k^- - x}{w_k}\right)\!K\!\left(\frac{Z_k-y}{w_k}\right),
\frac{1}{\|K\|_{\infty}^2}\!K\!\left(\frac{Z_n^- - x}{w_n}\right) K\!\left(\frac{Z_n-y}{w_n}\right)\!\right)\! ,
\end{align*}
where both the components in the covariance are bounded by $1$. We apply Theorem 16.1.5 of \cite{MandT} with $V=1$, $\Phi=(Z^-,Z)$,
$$ g= \frac{1}{\|K\|_{\infty}^2} K\left(\frac{\cdot-x}{w_n}\right) K\left(\frac{\cdot-y}{w_n}\right) \quad\text{and}\quad h= \frac{1}{\|K\|_{\infty}^2} K\left(\frac{\cdot-x}{w_k}\right) K\left(\frac{\cdot-y}{w_k}\right).$$
The conditions of the theorem are satisfied by $(\ref{eq:convTV})$ and $(\ref{eq:convTV-bis})$. We obtain
$$\left|\cov(A_k,A_n)\right| \leq \frac{\|K\|_{\infty}^4}{w_n^{2d}w_k^{2d}} B b^{k-n}\left(1+b^{-k}\right) .$$
Together with $1/w_k^{2d} \leq 1/w_n^{2d}$, this shows $(\ref{eq:cov})$.
\end{proof}


\noindent
In the following result, we give a bound for the variance of $\widehat{h}_n(x,y)$. It is a corollary of Proposition \ref{prop:varcov}.
\begin{cor1} \label{cor:varHn}
Let $n$ be an integer. We have
\begin{equation}
\label{eq:varhn}
\var\left(\widehat{h}_n(x,y)\right) \leq \frac{8}{n w_{n+1}^{4d}} \frac{\|K\|_{\infty}^4 B}{1-b^{-1}} .
\end{equation}
As a consequence, this variance goes to $0$ when $4\beta d<1$ (recall that $w_n=w_1 n^{-\beta}$).
\end{cor1}
\begin{proof}
This inequality is a consequence of $(\ref{eq:cov})$ stated in Proposition \ref{prop:varcov}. Indeed, in light of the expressions of $A_n$ $(\ref{eq:exprAn})$ and $\widehat{h}_n(x,y)$ $(\ref{eq:exprhn})$, we have
\begin{eqnarray*}
\var\left(\widehat{h}_n(x,y)\right) &=& \frac{2}{n^2} \sum_{k=1}^{n+1}\sum_{l=k}^{n+1}\cov(A_l,A_k) \\
&\leq& \frac{2}{n^2} \|K\|_{\infty}^4 {B}    \sum_{k=1}^{n+1}\sum_{l=k}^{n+1} {b}^{k-l}\left(1+{b}^{-k}\right) w_l^{-4d} .
\end{eqnarray*}
Using that ${b}^{-k}\leq 1$ and $w_l^{-4d}\leq w_{n+1}^{-4d}$,
\begin{eqnarray*}
\var\left(\widehat{h}_n(x,y)\right)  &\leq& \frac{4}{n^2 w_{n+1}^{4d}} \|K\|_{\infty}^4   {B} \sum_{k=1}^{n+1}{b}^k\sum_{l=k}^{n+1}{b}^{-l} \\
&\leq &  \frac{4}{n^2 w_{n+1}^{4d}} \|K\|_{\infty}^4\  {B} \sum_{k=1}^{n+1} {b}^k \frac{{b}^{-k}}{1-{b}^{-1}} \\
&\leq& \frac{8}{n w_{n+1}^{4d}} \frac{\|K\|_{\infty}^4B }{1-b^{-1}},
\end{eqnarray*}
with $(n+1)/n\leq 2$.
\end{proof}

\noindent
Now, one may state the consistency of our estimator of $h(x,y)$.
\begin{prop1}\label{prop:hchapeaun}
	Let $(x,y)\in E^2$. One chooses $w_1 \delta <\text{\normalfont dist}(x,\partial E)$ and $8\beta d<1$. Then,
	$$\widehat{h}_n(x,y) \stackrel{a.s.}{\longrightarrow} h(x,y) ,$$
	when $n$ goes to infinity.
\end{prop1}

\begin{proof}
According to Proposition \ref{prop:asymptoticunbiased}, we only have to prove that
\begin{equation}
\label{eq:defYn}
Y_n=\left(\widehat{h}_n(x,y)-\esp\left[\widehat{h}_n(x,y)\right]\right)^2
\end{equation}
almost surely converges to $0$. In the sequel of the proof, we establish that there exists a random variable $Y$ such that $Y_n\tops Y$. Since the sequence $(Y_n)$ tends to $0$ in $\mathbf{L}^1$ (remark that $\esp[Y_n]=\var\left(\widehat{h}_n(x,y)\right)$, together with Corollary \ref{cor:varHn}), $Y=0~a.s.$ and it induces the expected result. In order to show the almost sure convergence of the sequence $(Y_n)$, we use Van Ryzin's lemma (see \cite{MR0258172}). In light of this result, if the sequence $(Y_n)$ satisfies the following conditions,
\vspace{-0.2cm}
\begin{enumerate}[(i)]
\item $Y_n\geq0~a.s.$,
\item $\esp[Y_1]<+\infty$,
\item $\esp\left[Y_{n+1}|\mathcal{S}_n\right] \leq Y_n + Y'_n~a.s.$, where $\mathcal{S}_n=\sigma(Y_1,\dots,Y_n)$ and $Y'_n$ is $\mathcal{S}_n$-measurable,
\item $\sum_{n\geq1} \esp\left[|Y'_n|\right] <+\infty$,
\end{enumerate}
\vspace{-0.2cm}
then $Y_n\tops Y$. In our context, points (i) and (ii) are unquestionably satisfied. Let us define
\begin{equation}\label{eq:defVn}
V_n= \frac{1}{n}\sum_{k=1}^{n+1} \big(   A_k -\esp[A_k]\big) .
\end{equation}
By $(\ref{eq:exprhn})$, $(\ref{eq:exprAn})$ and $(\ref{eq:defYn})$, $V_n^2=Y_n$ and we have the recurrence relation,
$$V_n = \frac{A_{n+1}-\esp[A_{n+1}] + (n-1)V_{n-1}}{n} .$$
By squaring, we obtain
\begin{eqnarray*}
Y_n &=& \left(\frac{n-1}{n}\right)^2 Y_{n-1} + \frac{ \big(A_{n+1}-\esp[A_{n+1}]\big)^2 + 2(n-1)V_{n-1}\big(A_{n+1}-\esp[A_{n+1}]\big)}{n^2} \\
&\leq & Y_{n-1} +  \frac{ \big(A_{n+1}-\esp[A_{n+1}]\big)^2 + 2(n-1)V_{n-1}\big(A_{n+1}-\esp[A_{n+1}]\big)}{n^2} .
\end{eqnarray*}
Finally, $\esp[Y_n|\mathcal{S}_{n-1}] \leq Y_{n-1} + Y'_{n-1}$, where 
\begin{equation}
\label{eq:Yprimen}
Y'_{n-1} = \frac{1}{n^2} \esp\left[  \big(A_{n+1}-\esp[A_{n+1}]\big)^2 + 2(n-1)V_{n-1}\big(A_{n+1}-\esp[A_{n+1}]\big) \Big| \mathcal{S}_{n-1}\right] .
\end{equation}
Thus, (iii) is checked. Ultimately, we have to verify (iv). By $(\ref{eq:Yprimen})$ and Cauchy-Schwarz inequality,
\begin{eqnarray*}
\esp\left[|Y_{n-1}'|\right] &\leq& \frac{1}{n^2}\var(A_{n+1}) + \frac{2(n-1)}{n^2} \esp\left[\big|V_{n-1}\big|\,\big|A_{n+1}-\esp[A_{n+1}]\big|\right]  \\
&\leq & \frac{1}{n^2}\var(A_{n+1}) +  \frac{2}{n}  \sqrt{\esp[Y_{n-1}] \var(A_{n+1})} \\
&\leq & \frac{1}{n^2}\var(A_{n+1}) + \frac{2}{n} \sqrt{\var\left(\widehat{h}_{n-1}(x,y)\right) \var(A_{n+1})} .
\end{eqnarray*} 
Thus, by $(\ref{eq:varianceAn})$ and $(\ref{eq:varhn})$, there exist two numbers $c_1$ and $c_2$ such that
\begin{eqnarray*}
\esp\left[|Y_{n-1}'|\right]  &\leq & \frac{c_1}{n^2 w_n^{4d}} + \frac{c_2}{n^{3/2} w_n^{4d}} \\
&\leq & \frac{c_1}{w_1 n^{2-4\beta d}} + \frac{c_2}{w_1 n^{3/2-4\beta d}} .
\end{eqnarray*}
As a consequence, $\sum \esp[|Y_n'|]$ is a convergent series for $8\beta d<1$.
\end{proof}

Now, we give the proof of the almost sure convergence of the estimator $\widehat{q}_n(x,y)$.

\noindent
\textbf{Proof of Theorem \ref{Q:PDMP:CVps}.} In light of $(\ref{eq:hpq})$, if $p(x)>0$, one may write $q(x,y) = h(x,y)/p(x)$. In addition, $\widehat{q}_n(x,y)$ is defined by the ratio $\widehat{h}_n(x,y)/\widehat{p}_n(x)$ (see $(\ref{eq:exprhn})$ for the expression of $\widehat{h}_n(x,y)$ and $(\ref{eq:exprpn})$ for the one of $\widehat{p}_n(x)$), where $\widehat{h}_n(x,y)$ (respectively $\widehat{p}_n(x)$) estimates $h(x,y)$ (resp. $p(x)$). In such a case, the result is a corollary of Propositions \ref{prop:pchapeaun} and \ref{prop:hchapeaun}.


\section{Central limit theorem}\label{s:clt}

	In this section, we establish a central limit theorem for $\widehat{q}_n(x,y)$ for any $(x,y)\in E^2$ (see Theorem \ref{Q:PDMP:TCL}). First, we derive the rate of convergence of the recursive estimator $\widehat{h}_n(x,y)$ of $h(x,y)$ under some conditions on the parameter $\beta$.
	
	\begin{prop1}\label{prop:vitessehn}
	Let us choose $v_1$ and $w_1$ such that $\max(v_1,w_1)\delta<\text{\normalfont dist}(x,\partial E)$. If
	$$2(1-\alpha d)<4\beta<\min\left(\frac{1}{2d} \, ,\, \alpha - \frac{1}{2d}\right),$$
	then, when $n$ goes to infinity,
	$$n^{(1-\alpha d)/2} \left(\widehat{h}_n(x,y) - h(x,y)\right) \tops 0 .$$
	\end{prop1}
	
	\begin{proof}
	We take back the definitions $(\ref{eq:defYn})$ of $Y_n$ and $(\ref{eq:defVn})$ of $V_n$ used in the proof of Proposition \ref{prop:hchapeaun}. Recall that we have $V_n^2=Y_n$. Together with $(\ref{eq:exprhn})$ and $(\ref{eq:exprAn})$, we have
	\begin{equation*}
	n^{(1-\alpha d)/2}\left(\widehat{h}_n(x,y) - h(x,y)\right) = n^{(1-\alpha d)/2} V_n + n^{(1-\alpha d)/2} \left(\esp\left[\widehat{h}_n(x,y)\right] - h(x,y)\right).
	\end{equation*}
	By $(\ref{eq:biais:maj})$ and $w_j=w_1 j^{-\beta}$,
	$$n^{(1-\alpha d)/2} \left(\esp\left[\widehat{h}_n(x,y)\right] - h(x,y)\right) = \mathcal{O}\left(n^{1/2-\beta-\alpha d/2}\right).$$
	As a consequence, the bias term 
	$$n^{(1-\alpha d)/2} \left(\esp\left[\widehat{h}_n(x,y)\right] - h(x,y)\right)$$
	tends to $0$ when $2\beta>1-\alpha d$. Now, we only have to prove that $W_n=n^{1-\alpha d} Y_n$ almost surely tends to $0$. By virtue of Van Ryzin's lemma and a calculus similar to the one implemented in the proof of Proposition \ref{prop:hchapeaun}, the sequence $(W_n)$ almost surely tends to a random variable $W$ for $\beta$ such that $4\beta d<\alpha d-1/2$. As aforementioned in the proof of Proposition \ref{prop:hchapeaun},
	$$\esp[W_n]\,=\,n^{1-\alpha d}\esp[Y_n]\,=\,n^{1-\alpha d}\var\left(\widehat{h}_n(x,y)\right).$$
	Thus, in light of Corollary \ref{cor:varHn}, $(W_n)$ converges to $0$ in $\mathbf{L}^1$ when $4\beta<\alpha$. Consequently, $W=0~a.s.$ This shows the expected result.
	\end{proof}

	Under an additional assumption presented in the sequel, we give a central limit theorem for $\widehat{p}_n(x)$.
	
	\begin{hyp}\label{hyps:clt}For any $\alpha$ such that $1/(2+d)<\alpha<1/d$, we assume that, when $n$ goes to infinity,
	\begin{eqnarray*}
	\frac{1}{n^{(1+\alpha d)/2}}\left|\sum_{j=1}^{n} \left(r(Z_j^-,x)-p(x)\right)\right| &\tops& 0 .
	\end{eqnarray*}
	\end{hyp}

	\noindent
	This kind of condition is satisfied for iterative models with some Lipschitz mixing properties (see \cite[page 234]{MD} for instance). We have the following central limit theorem for $\widehat{p}_n(x)$.

	\begin{prop1}\label{tcl:pn}
	Let us choose $v_1$ such that $v_1\delta<\text{\normalfont dist}(x,\partial E)$. If $1/(2+d)<\alpha<1/d$ and $p(x)>0$, then, 
	\begin{equation*}\label{clt:eq:pn}
	n^{(1-\alpha d)/2} \left(\widehat{p}_n(x)-p(x)\right) \stackrel{\mathcal{D}}{\longrightarrow} \mathcal{N}\left(0,\frac{\tau^2p(x)}{1+\alpha d}\right) ,
	\end{equation*}
	when $n$ goes to infinity, where $\tau^2=\int_{\R^d}K^2(y)\ud y$.
	\end{prop1}
	
	\begin{proof}
	By $(\ref{eq:decompoh})$, we have
	\begin{align*}
	n^{(1-\alpha d)/2} &\big(\widehat{p}_n(x) - p(x)\big)\\
	&= \frac{1}{n^{(1+\alpha d)/2}v_1^d} K\left(\frac{Z_1^- - x}{v_1}\right) + \frac{M_n}{n^{(1+\alpha d)/2}}+ 	n^{(1-\alpha d)/2} R_n^{(1)} + 	n^{(1-\alpha d)/2} R_n^{(2)},
	\end{align*}
	where $M_n$, $R_n^{(1)}$ and $R_n^{(2)}$ are defined by $(\ref{eq:decompo:Mn})$, $(\ref{eq:decompo:Rn1})$ and $(\ref{eq:decompo:Rn2})$. It is obvious that the first term almost surely tends to $0$ since $K$ is a bounded function. In addition, by $(\ref{eq:decompo:majorRn1})$ together with $v_j=v_1 j^{-\alpha}$,
	$$ n^{(1-\alpha d)/2} \left|R_n^{(1)}\right| = \mathcal{O}\left(n^{\frac{1-\alpha d - 2\alpha}{2}}\right)~a.s.$$
	As a consequence, $n^{(1-\alpha d)/2} R_n^{(1)}$ almost surely tends to $0$ when $1<\alpha (d+2)$. The term $n^{(1-\alpha d)/2} R_n^{(2)}$ almost surely converges to $0$ under Assumption \ref{hyps:clt} with $(\ref{eq:exprh})$. By $(\ref{eq2:crochetMn})$ and the almost sure ergodic theorem, we have
	$$\frac{(1+\alpha d)\langle M\rangle_n}{n^{1+\alpha d}} \tops \tau^2 \int_{\overline{E}} r(\xi,x)\pi(\ud\xi)\,=\, \tau^2 p(x).$$
	As a consequence, by virtue of the central limit theorem for martingales (see for instance Corollary 2.1.10 of \cite{MD}), we have
	\begin{equation*}
	\frac{M_n}{n^{(1+\alpha d)/2}} \stackrel{\mathcal{D}}{\longrightarrow} \mathcal{N}\left(0 , \frac{\tau^2 p(x)}{1+\alpha d}\right) ,
	\end{equation*}
	when $n$ goes to infinity, if Lindeberg's condition is satisfied. Now, we only have to verify this technical condition in order to end the proof. By $(\ref{eq:decompo:Mn})$ together with $v_j=v_1 j^{-\alpha}$, we have
	$$\left|M_j - M_{j-1}\right|\,\leq\,\frac{\|K\|_{\infty}}{v_{j+1}^d} + \|r\|_{\infty} \,=\,\mathcal{O}\left(j^{\alpha d}\right)~a.s.$$ 
	Thus, there exists a constant $C$ such that we have the following inclusions, for $1\leq j\leq n$,
	$$\left\{ |M_j-M_{j-1}| \geq \varepsilon n^{(1+\alpha d)/2}\right\} \,\subset\, \left\{ C j^{\alpha d} \geq \varepsilon n^{(1+\alpha d)/2}\right\} \,\subset\,\left\{ C n^{\alpha d} \geq \varepsilon n^{(1+\alpha d)/2}\right\} ,$$
	for any $\varepsilon>0$. Consequently, as $n^{(\alpha d-1)/2}$ tends to $0$ when $\alpha d<1$, we have
	$$\mathbf{1}_{\left\{|M_j-M_{j-1}|\geq \varepsilon n^{(1+\alpha d)/2}\right\}}   =  0~a.s.$$
	for $n$ large enough. Finally, when $n$ goes to infinity,
	$$\frac{1}{n^{1+\alpha d}} \sum_{j=1}^n\esp\left[ (M_j-M_{j-1})^2 \mathbf{1}_{\left\{|M_j-M_{j-1}|\geq \varepsilon n^{(1+\alpha d)/2}\right\}} \Bigg|\mathcal{F}_j\right] \tops 0 .$$
	This shows Lindeberg's condition and thus the expected result.
	\end{proof}

	Now, one may give the proof of the central limit theorem for $\widehat{q}_n(x,y)$, presented in Theorem \ref{Q:PDMP:TCL}, under some assumptions on the bandwidth parameters $\alpha$ and $\beta$.
	
%
\noindent
\textbf{Proof of Theorem \ref{Q:PDMP:TCL}.} We have
\begin{eqnarray*}
n^{(1-\alpha d)/2} &&\hspace{-0.9cm}\left(\widehat{q}_n(x,y)-q(x,y)\right)\\
&=& n^{(1-\alpha d)/2} \left( \frac{\widehat{h}_n(x,y)}{\widehat{p}_n(x)} - \frac{h(x,y)}{p(x)}\right)\\
&=& \frac{n^{(1-\alpha d)/2} \big(\widehat{h}_n(x,y) - h(x,y)\big)}{\widehat{p}_n(x)} + \frac{q(x,y) n^{(1-\alpha d)/2} \big(\widehat{p}_n(x)-p(x)\big)}{\widehat{p}_n(x)} .
\end{eqnarray*}
In light of Propositions \ref{tcl:pn} and \ref{prop:pchapeaun} and Slutsky's theorem, we have
\begin{equation*}
\frac{q(x,y) n^{(1-\alpha d)/2} \left(\widehat{p}_n(x)-p(x)\right)}{\widehat{p}_n(x)}  \stackrel{\mathcal{D}}{\longrightarrow} \mathcal{N}\left(0,\frac{q(x,y)^2\tau^2}{p(x)(1+\alpha d)}\right) ,
\end{equation*}
when $n$ goes to infinity. By virtue of Proposition \ref{prop:vitessehn}, we obtain the expected result.

\vspace{0.5cm}
	
\noindent
\textbf{Acknowledgments.} The author would like to thank the Associate Editor, two anonymous Reviewers and his PhD advisors, François Dufour and Anne Gégout-Petit, for their suggestions and helpful comments.


\nocite{*}
\bibliographystyle{acm}
\bibliography{EstQ_main} 

	\begin{figure}[!p]
	\includegraphics[width=0.7\textwidth]{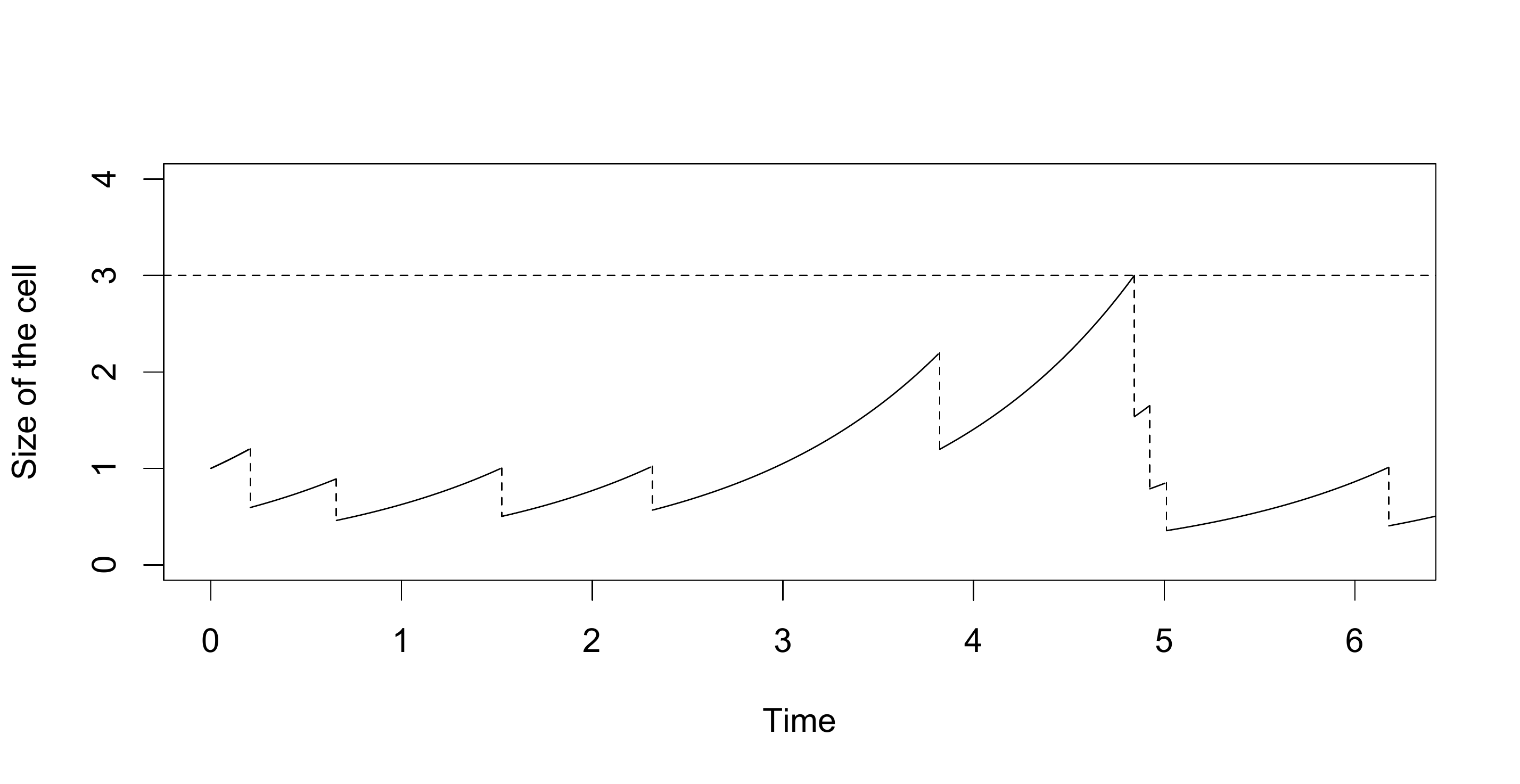}
	\caption{Evolution of the size of the cell when $10$ jumps occur.}
	\label{fig:trajectoire}
	\end{figure}
	
	\begin{figure}[!p]
	\begin{tabular}{cc}
	\includegraphics[width=0.49\textwidth,height=0.28\textwidth]{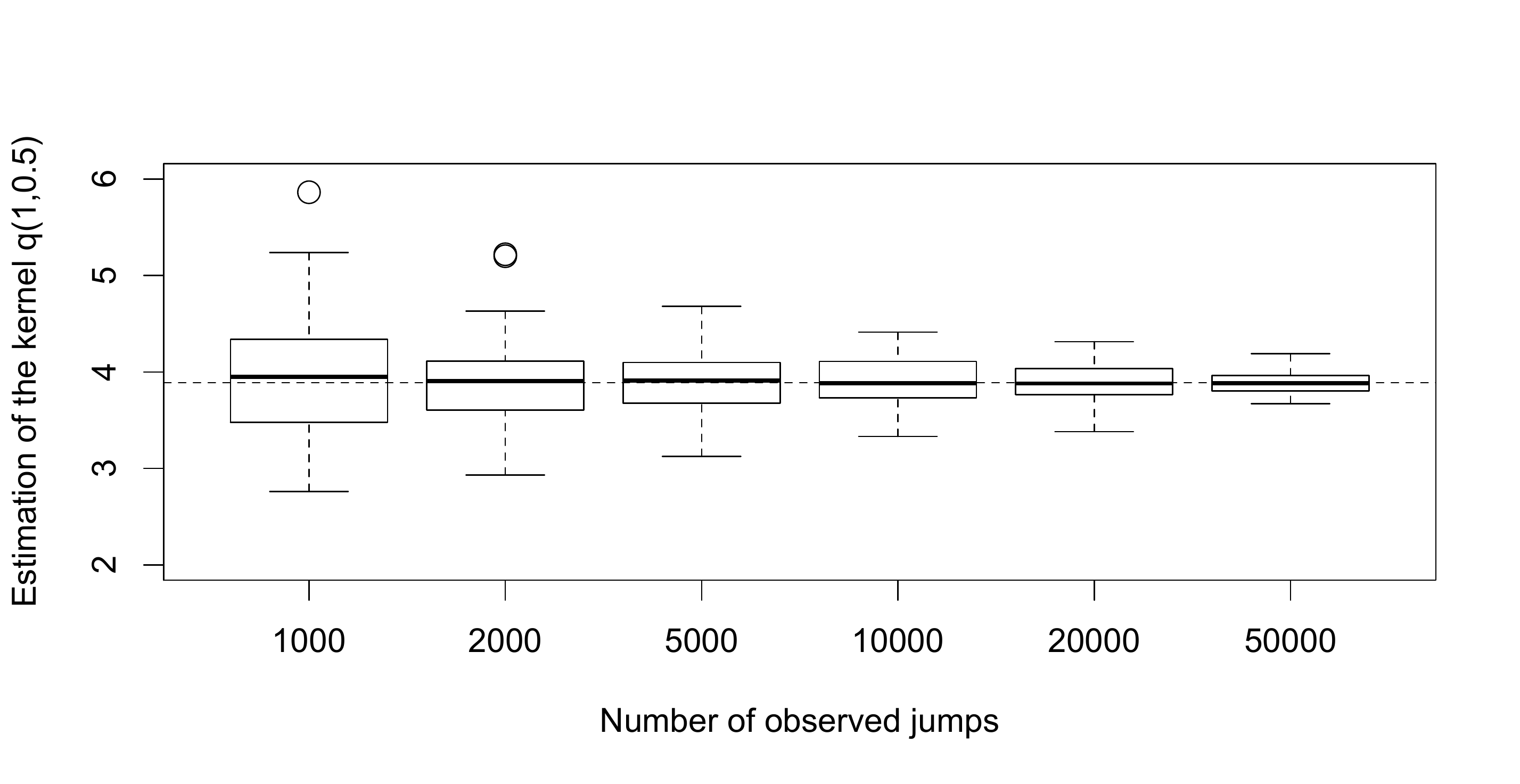} & \includegraphics[width=0.49\textwidth,height=0.28\textwidth]{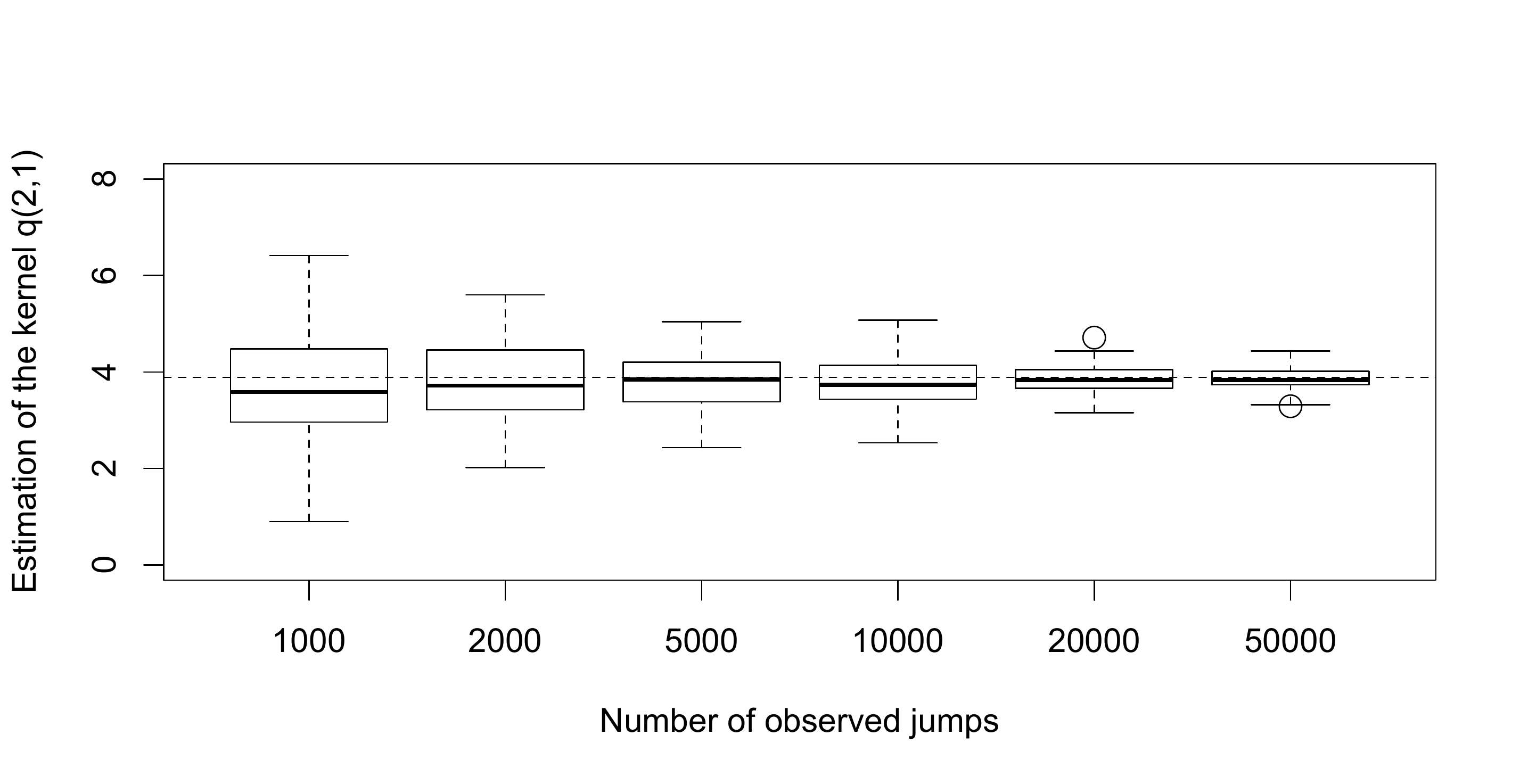}
	\end{tabular}
	\caption{Boxplots over $100$ replicates of the estimator $\widehat{q}_n(x,y)$ of $q(x,y)$ for $x=1$ and $y=0.5$ (left) and $x=2$ and $y=1$ (right) from different numbers of observed jumps.}
	\label{fig:boxplots}
	\end{figure}

	\begin{figure}[!p]
	\begin{tabular}{cc}
	\includegraphics[width=0.49\textwidth,height=0.28\textwidth]{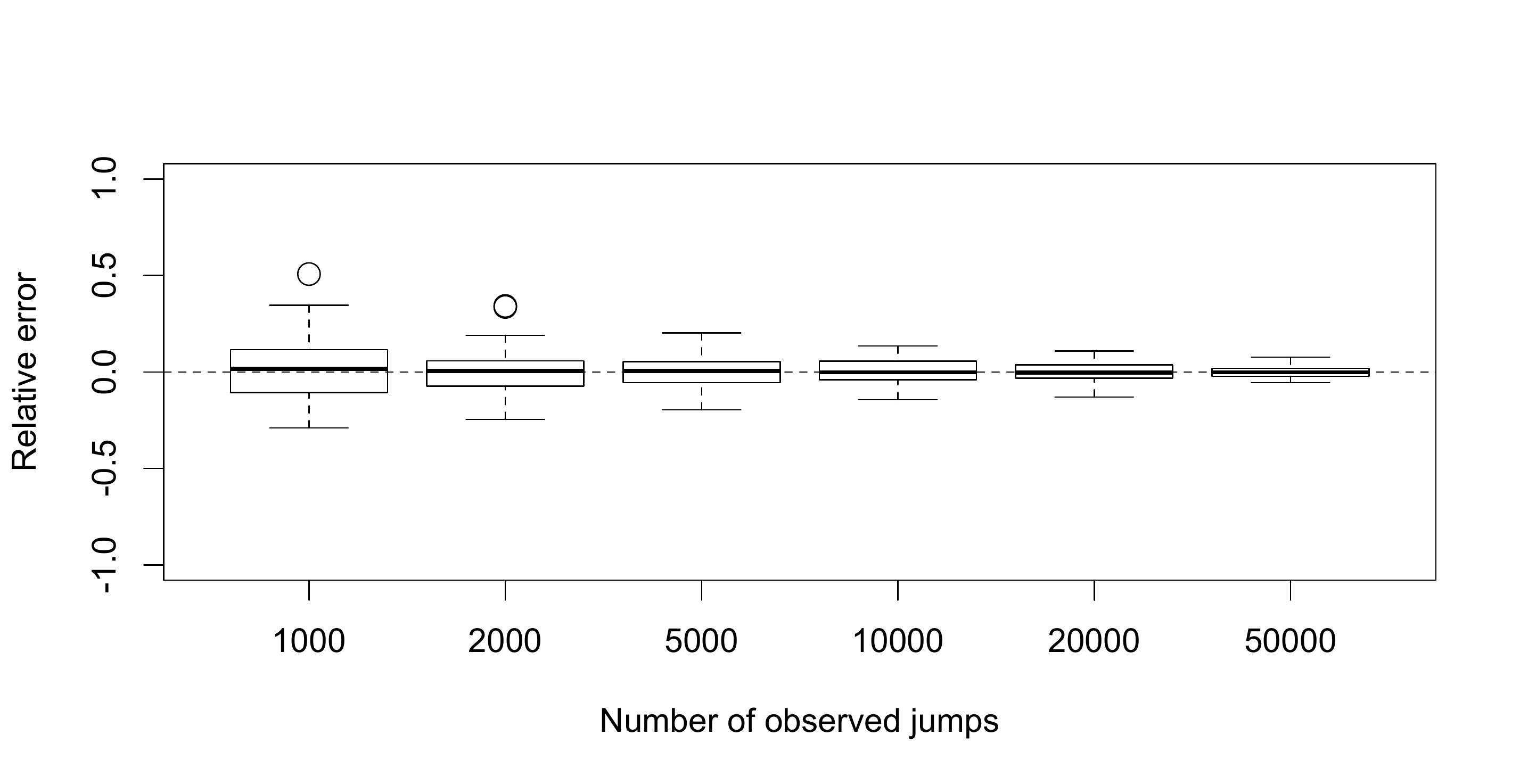} & \includegraphics[width=0.49\textwidth,height=0.28\textwidth]{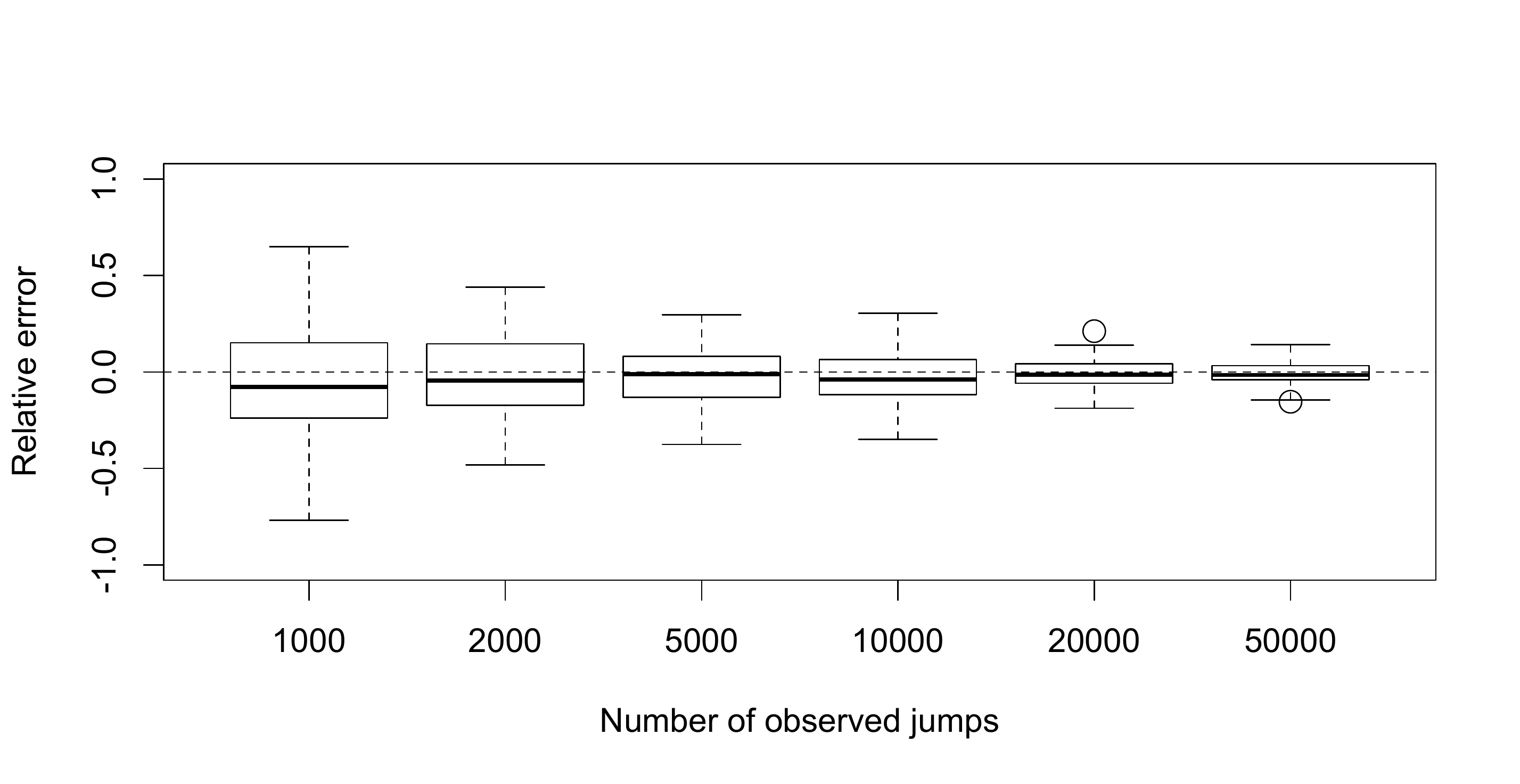}
	\end{tabular}
	\caption{Boxplots over $100$ replicates of the relative error between $\widehat{q}_n(x,y)$ and $q(x,y)$ for $x=1$ and $y=0.5$ (left) and $x=2$ and $y=1$ (right) from different numbers of observed jumps.}
	\label{fig:boxplotsRE}
	\end{figure}

	\begin{figure}[hp]
	\begin{tabular}{cc}
	\includegraphics[width=0.49\textwidth,height=0.3\textwidth]{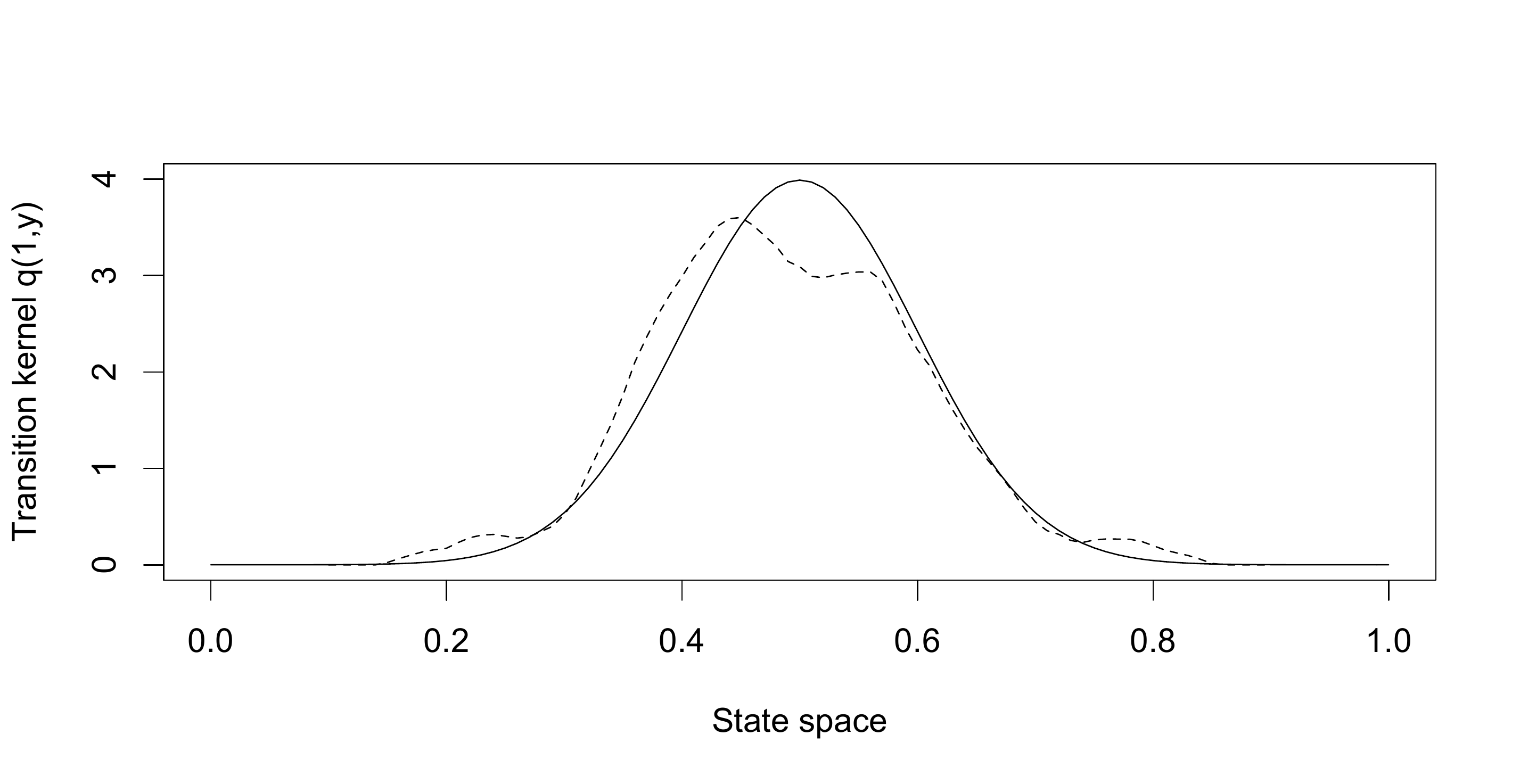} & \includegraphics[width=0.49\textwidth,height=0.3\textwidth]{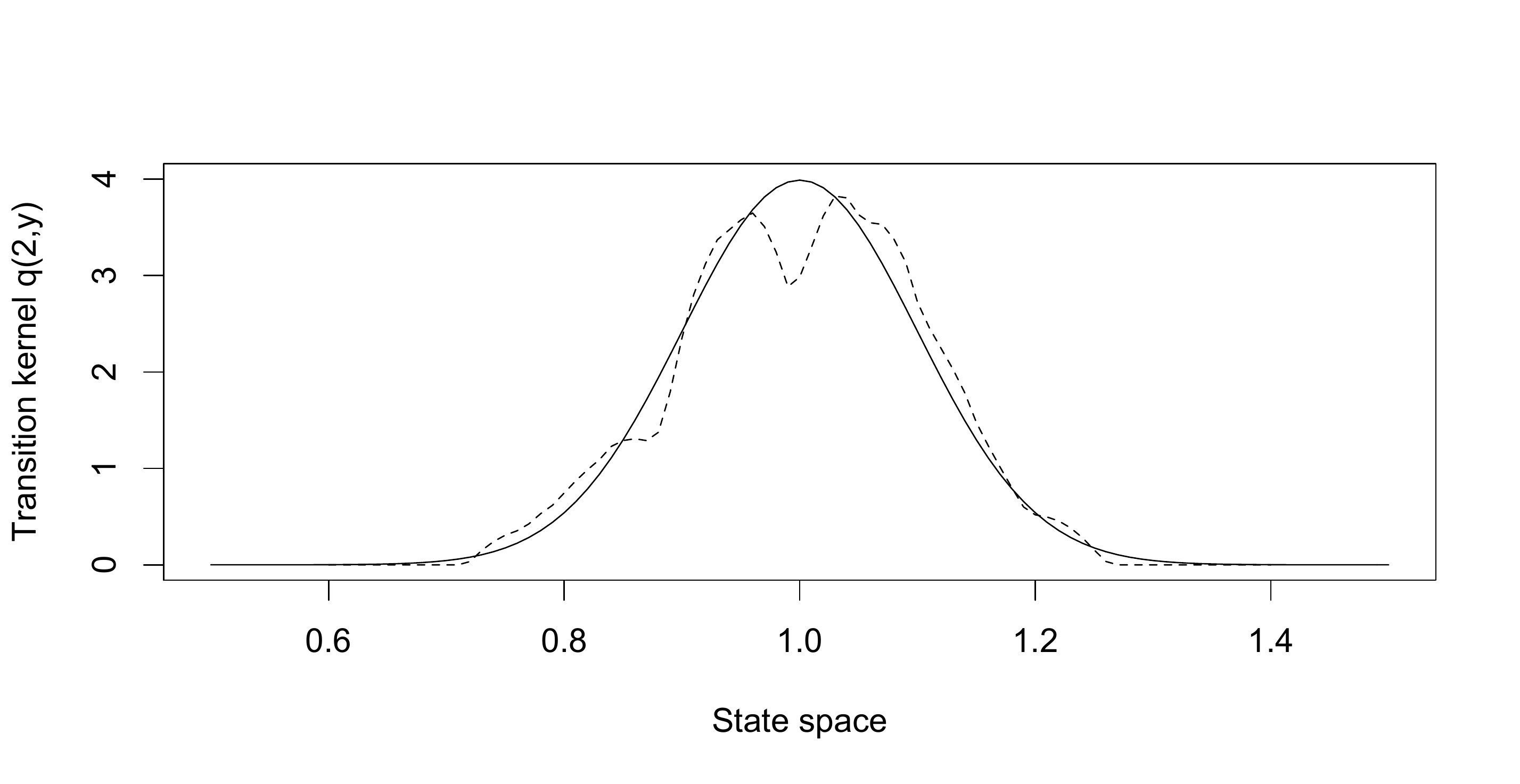} \\
	\includegraphics[width=0.49\textwidth,height=0.3\textwidth]{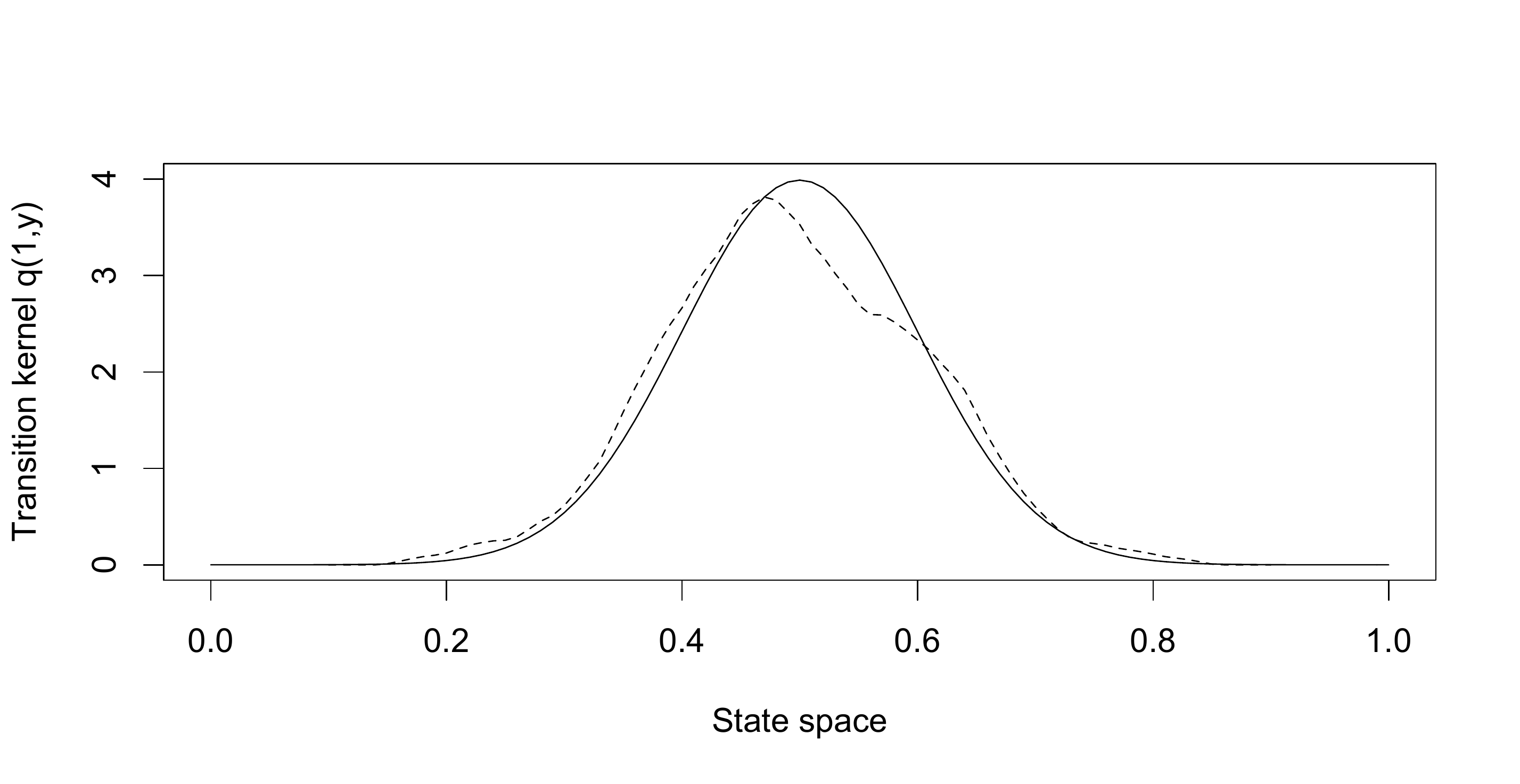} & \includegraphics[width=0.49\textwidth,height=0.3\textwidth]{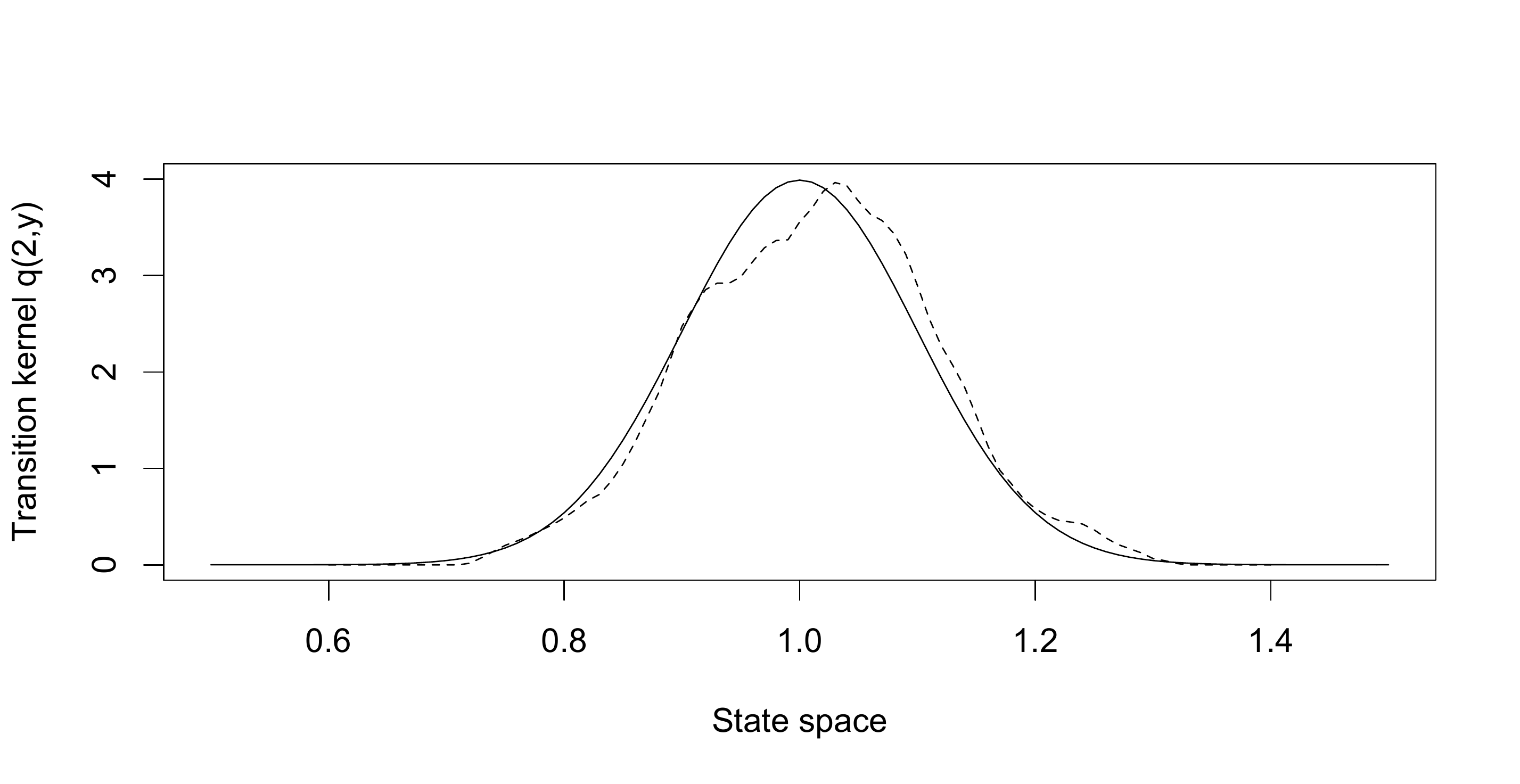}\\
	\includegraphics[width=0.49\textwidth,height=0.3\textwidth]{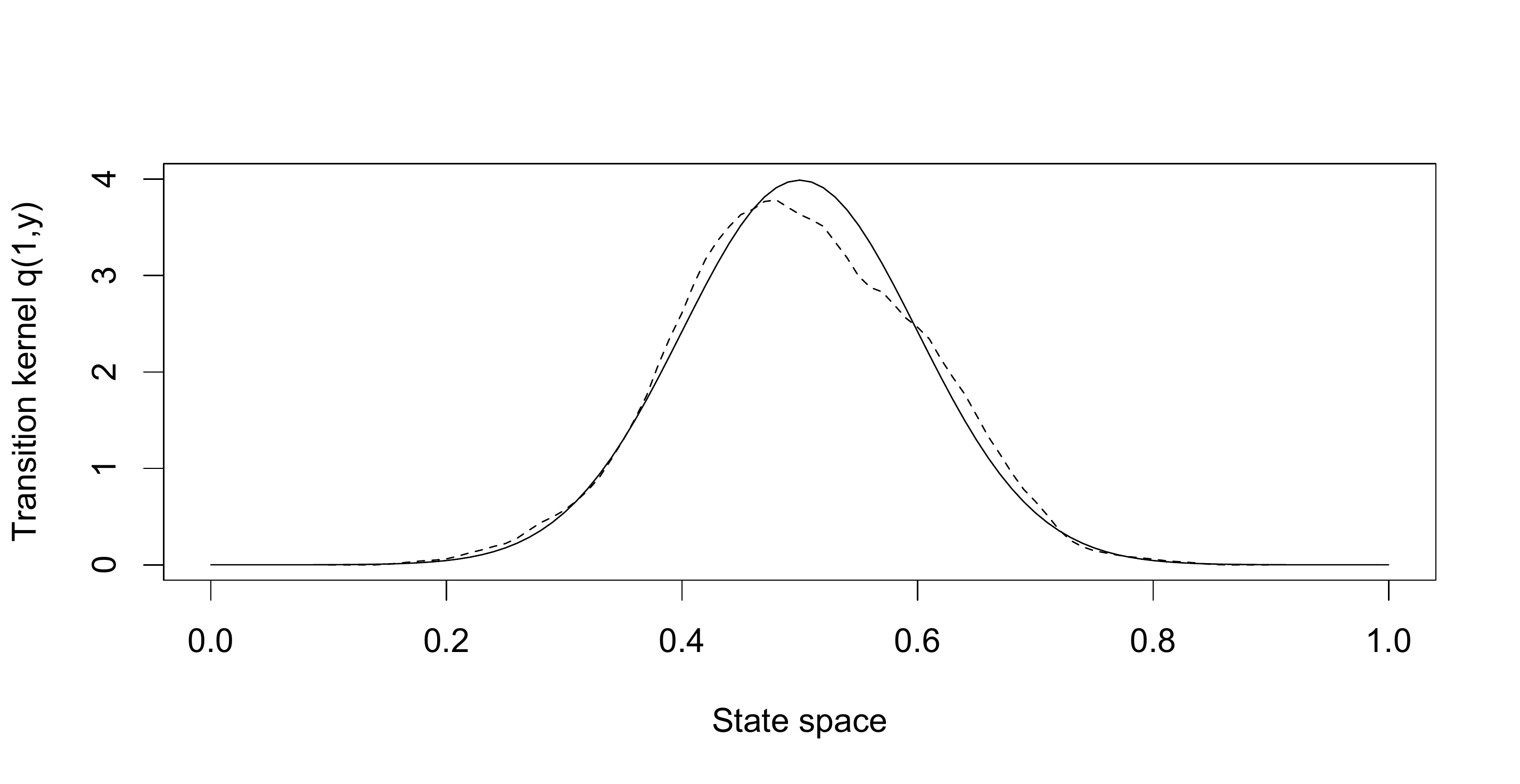} & \includegraphics[width=0.49\textwidth,height=0.3\textwidth]{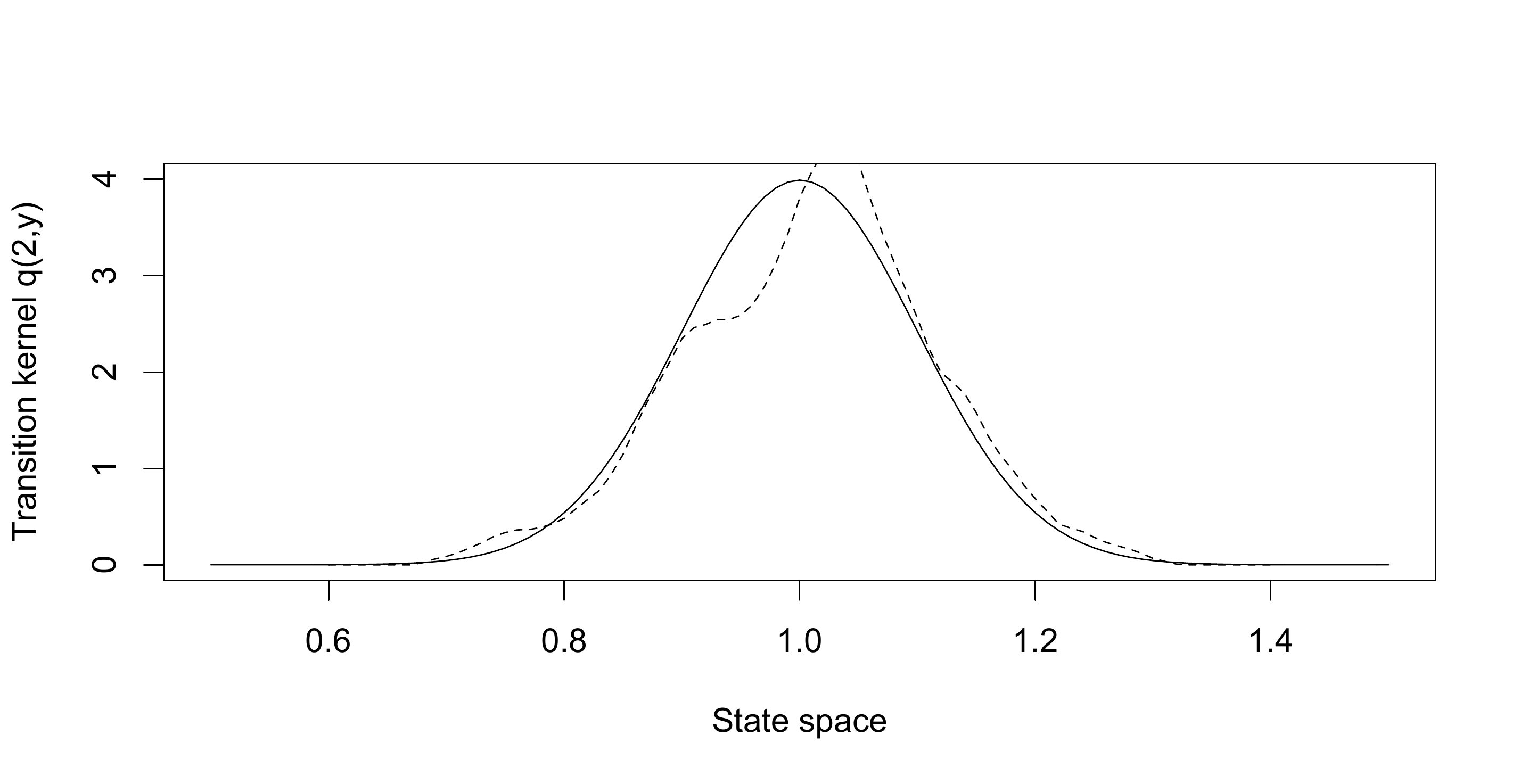}\\
	\includegraphics[width=0.49\textwidth,height=0.3\textwidth]{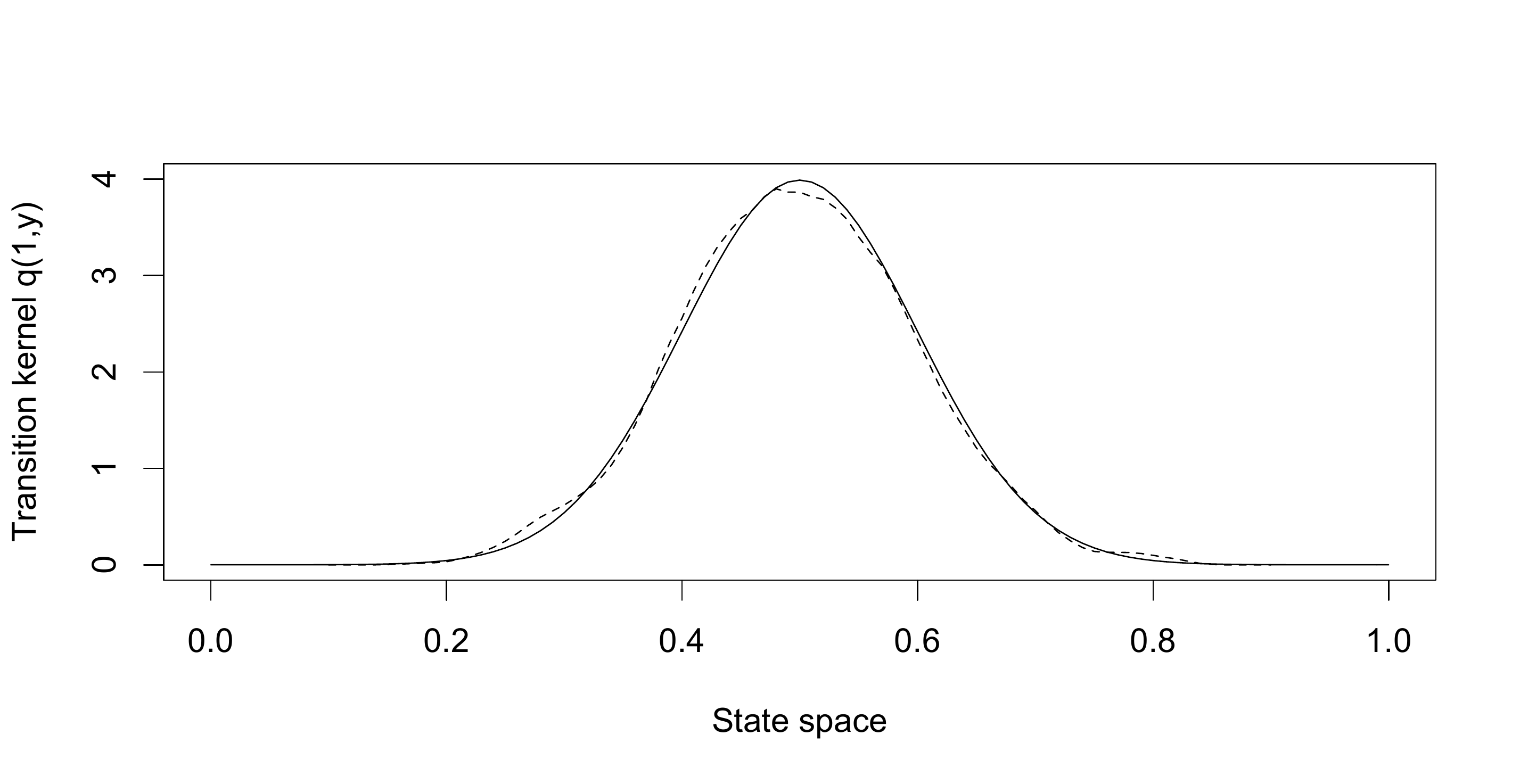} & \includegraphics[width=0.49\textwidth,height=0.3\textwidth]{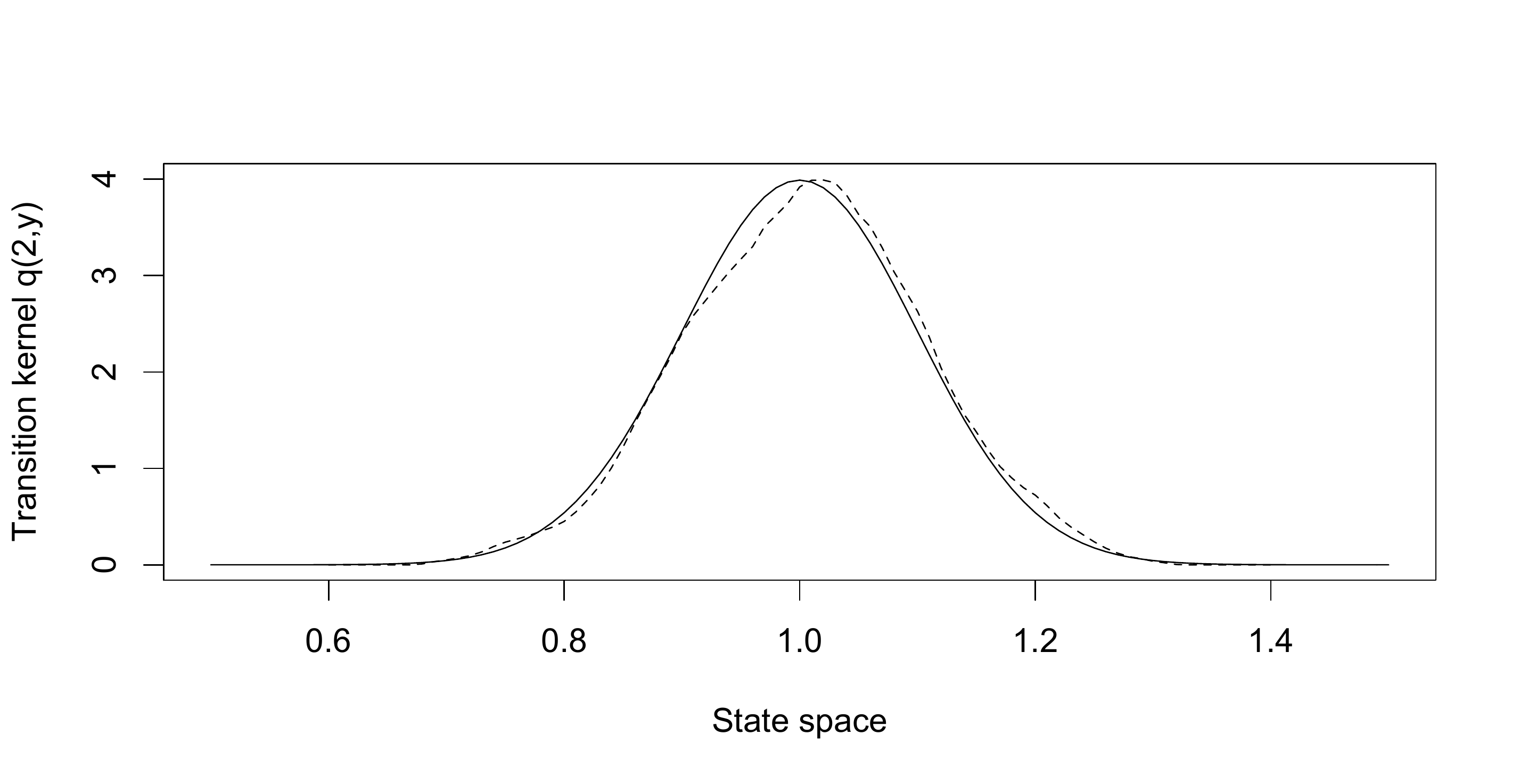}
	\end{tabular}
	\caption{Estimation of the transition density $q(x,\cdot)$ for $x=1$ (left) and $x=2$ (right) from different numbers of observed jumps: $5\,000$, $10\,000$, $20\,000$ and $50\,000$ (from top to bottom).}
	\label{fig:courbes}
	\end{figure}

	\begin{figure}[hp]
	\includegraphics[width=0.7\textwidth,height=0.3\textwidth]{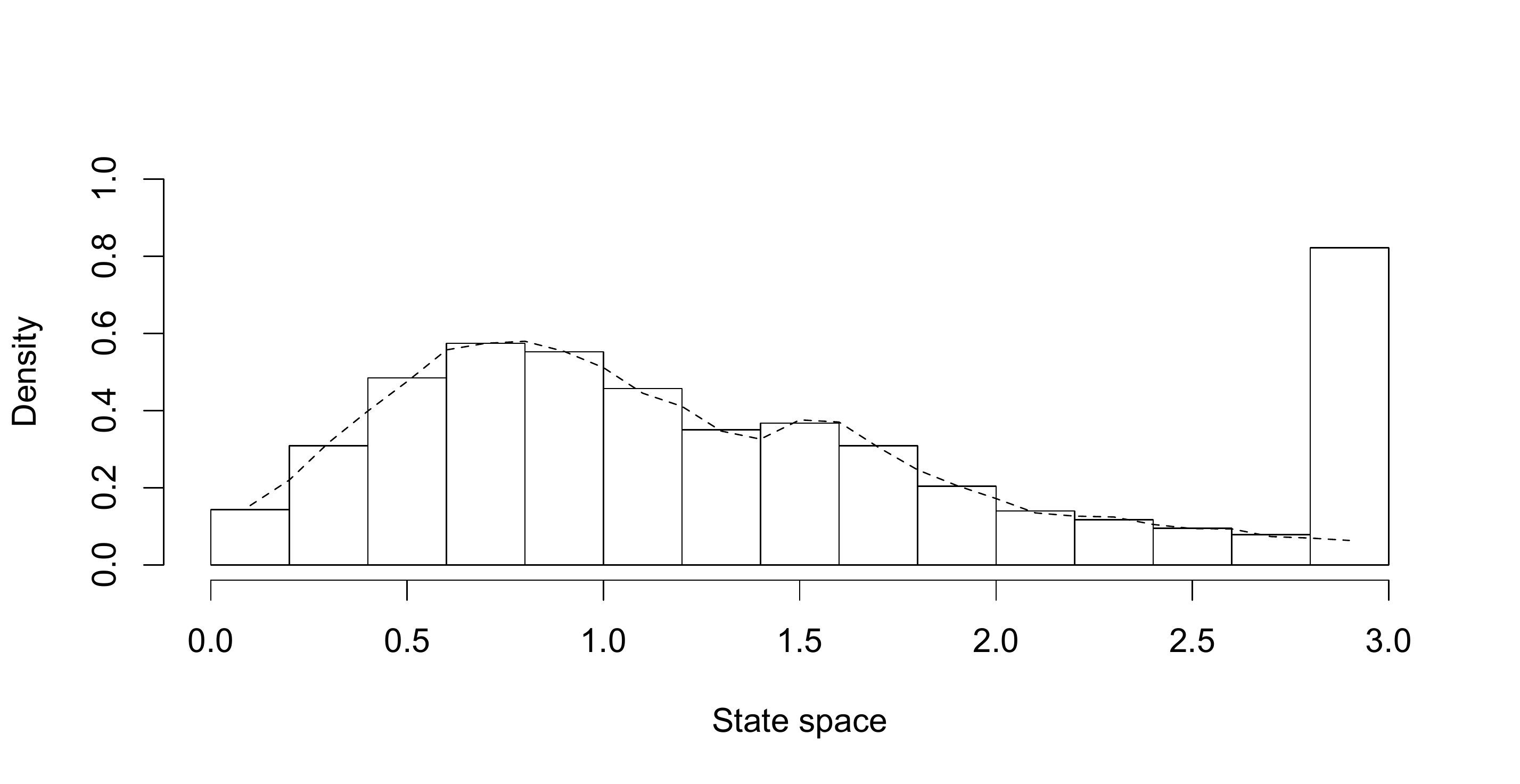}
	\caption{Estimation of the invariant distribution $\pi$ of the Markov chain $(Z_n^-)$. The estimator $\widehat{p}_n(x)$ is drawn in dashed line for $x\in[0.1,2.9]$. The histogram shows the empirical distribution of $(Z_n^-)$.}
	\label{fig:pi}
	\end{figure}

	\begin{figure}[hp]
	\centering
	\includegraphics[width=0.9\textwidth,height=0.3\textwidth]{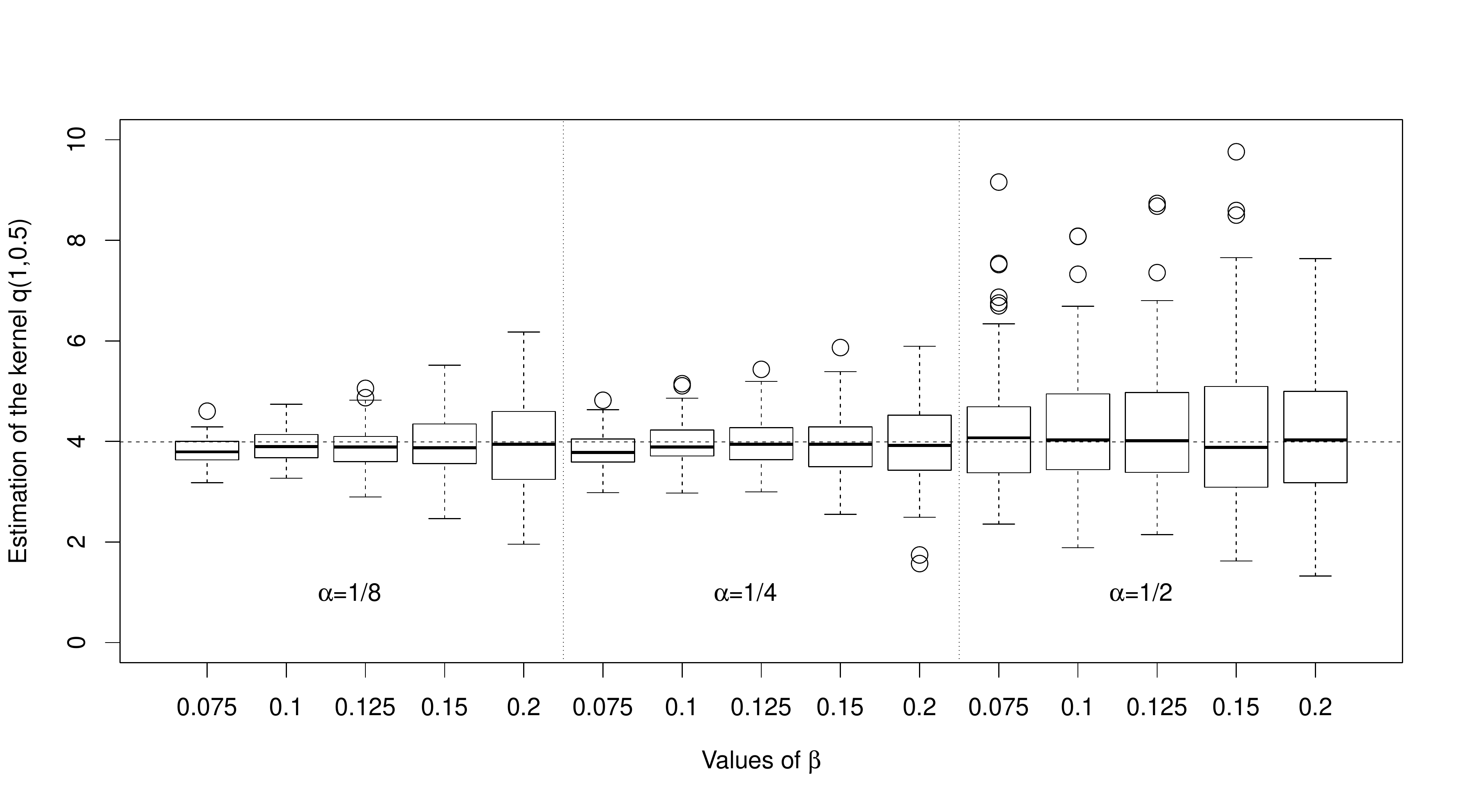}
	\caption{Boxplots over $100$ replicates of the estimator $\widehat{q}_n(1,0.5)$ from $n=10\,000$ observed jumps for different values of the parameters $\alpha$ and $\beta$.}
	\label{fig:parameters}
	\end{figure}

	\begin{figure}[hp]
	\begin{tabular}{cc}
	\includegraphics[width=0.49\textwidth,height=0.34\textwidth]{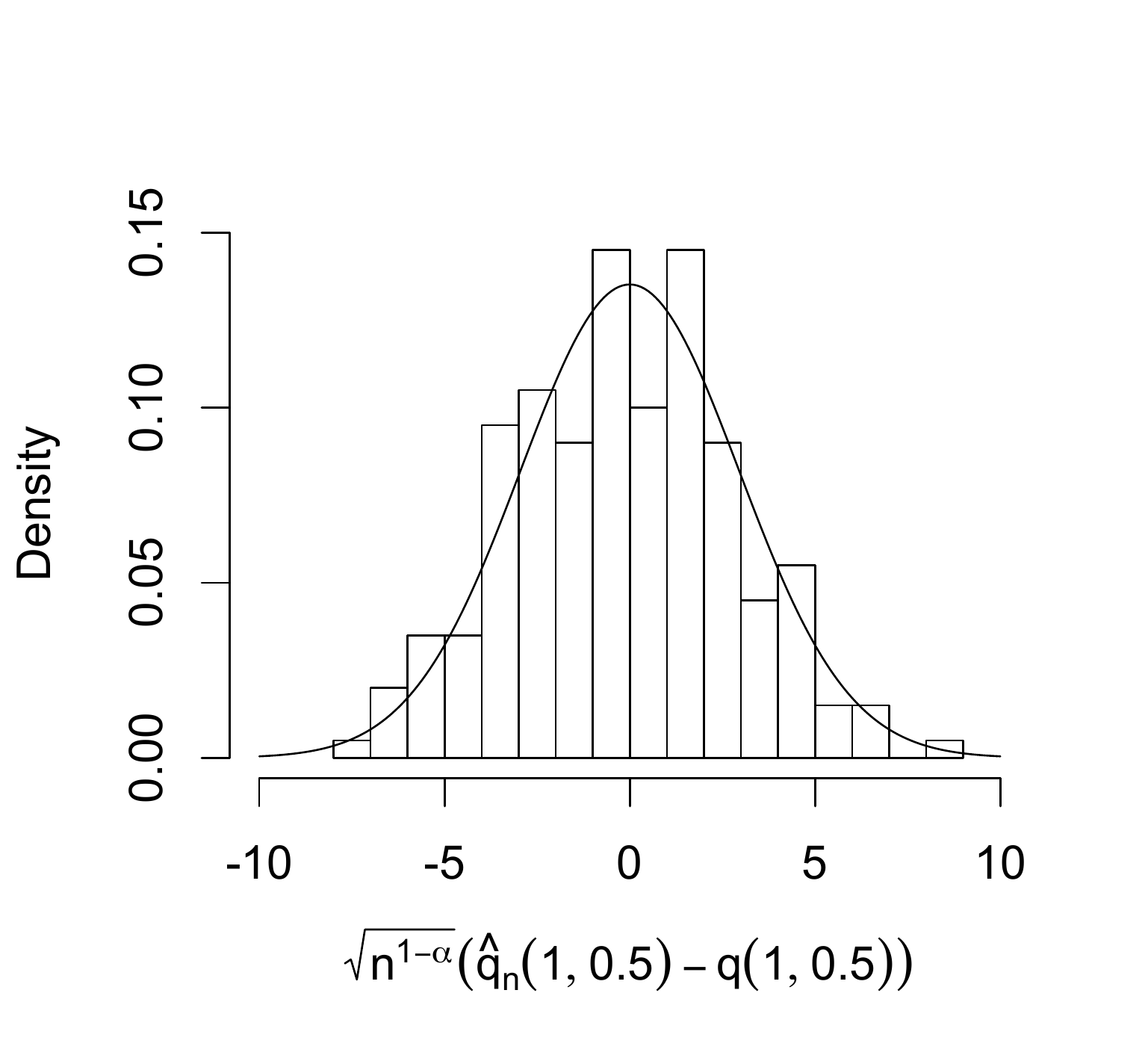} & \includegraphics[width=0.49\textwidth,height=0.34\textwidth]{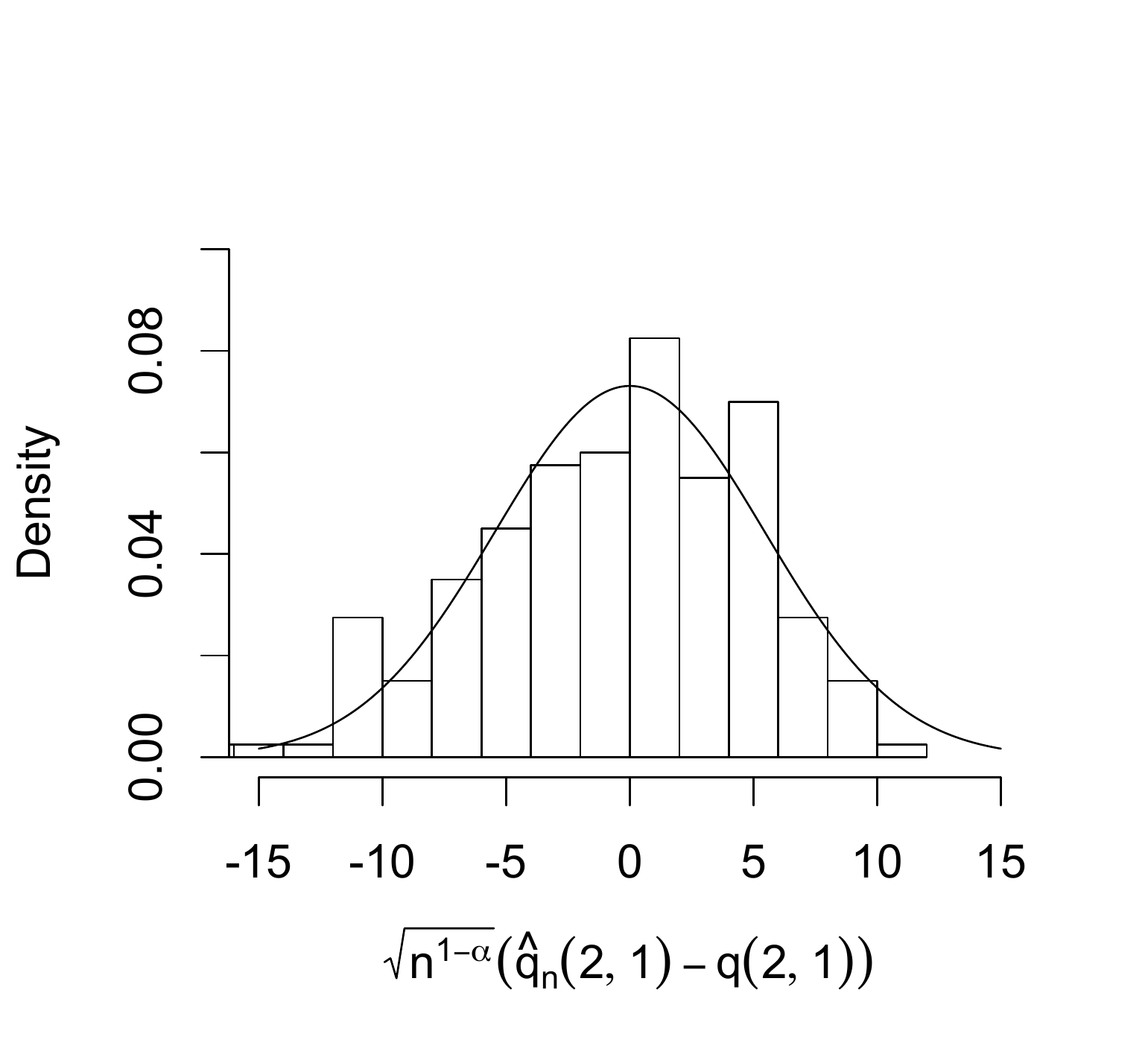}
	\end{tabular}
	\caption{Illustration of the central limit theorem for $\widehat{q}_n(x,y)$ for $x=1$ and $y=0.5$ (left) and $x=2$ and $y=1$ (right) from $50\,000$ observed jumps. The variance of the Gaussian curve has been estimated.}
	\label{fig:TCL}
	\end{figure}

\end{document}